\date{\today}
\theoremstyle{plain}
\newtheorem{thm}{Theorem}[section]
\newtheorem{lem}[thm]{Lemma}
\newtheorem{prop}[thm]{Proposition}
\theoremstyle{definition}
\theoremstyle{remark}
\newtheorem{rem}{Remark}[section]
\numberwithin{equation}{section}
\renewcommand{\u}{{\mathbf u}}
\renewcommand{\v}{{\mathbf{v}}}
\renewcommand{\H}{\mathbf{H}}
\newcommand{\w}{{\mathbf w}}
\newcommand{\R}{{\mathbb R}}
\newcommand{\U}{{\mathbf U}}
\newcommand{\dv}{{\rm div }}
\newcommand{\cu}{{\rm curl\, }}
\newcommand{\E}{{\mathcal E}}
\newcommand{\red}{\color{red} }
\begin{document}
% \linenumbers

\title[Incompressible limit of  compressible non-isentropic MHD equations]
{Incompressible limit of the  non-isentropic ideal
  magnetohydrodynamic equations }

\author{Song Jiang}
\address{LCP, Institute of Applied Physics and Computational Mathematics, P.O.
 Box 8009, Beijing 100088, P.R. China}
 \email{jiang@iapcm.ac.cn}

\author{Qiangchang Ju}
\address{Institute of Applied Physics and Computational Mathematics, P.O.
 Box 8009-28, Beijing 100088, P.R. China}
 \email{qiangchang\_ju@yahoo.com}
 %\thanks{$^*$Corresponding author}

 \author[Fucai Li]{Fucai Li}
\address{Department of Mathematics, Nanjing University, Nanjing
 210093, P.R. China}
 \email{fli@nju.edu.cn}

\keywords{Compressible ideal MHD equations, non-isentropic,  incompressible limit}

\subjclass[2000]{76W05, 35B40}

\begin{abstract}
 We study the incompressible limit of the compressible non-
\linebreak isentropic ideal magnetohydrodynamic equations with general initial data in the whole space
$\mathbb{R}^d$ ($d=2,3$). We first establish the existence of classic solutions on a time interval independent of
the Mach number. Then, by deriving uniform a priori estimates, we
obtain the convergence of the solution to that of the incompressible
magnetohydrodynamic equations as the Mach number tends to zero.

%Furthermore, the result is valid for all nonnegative fluid viscosities
%and magnetic diffusivities.

\end{abstract}

\maketitle

\section{Introduction}

This paper is concerned with the
incompressible limit to the  compressible ideal non-isentropic magnetohydrodynamic (MHD)
equations with general initial data in the
whole space $\mathbb{R}^d$ ($d=2,3$), which take the following form
\begin{align}
&\partial_t\rho +\dv(\rho\u)=0, \label{naa} \\
&\partial_t(\rho\u)+\dv\left(\rho\u\otimes\u\right)+ {\nabla p}
  =(\nabla \times \H)\times \H , \label{nab} \\
&\partial_t\H-\nabla\times(\u\times\H)=0,\quad
\dv\H=0, \label{nac}\\
&\partial_t\E+\dv\left(\u(\E'+p)\right)
=\dv((\u\times\H)\times\H).
 \label{nad}
\end{align}
Here $\rho $ denotes the density, $\u\in \R^d$ the
velocity, and $\H\in \R^d$ the magnetic field, respectively; $\E$
is the total energy given by $\E=\E'+|\H|^2/2$ and
$\E'=\rho\left(e+|\u|^2/2 \right)$ with $e$ being the internal
energy, $\rho|\u|^2/2$ the kinetic energy, and $|\H|^2/2$ the
magnetic energy. The equations of state $p=p(\rho,\theta)$
and $e=e(\rho,\theta)$ relate the pressure $p$ and the internal
energy $e$ to the density $\rho$ and the temperature $\theta$ of the
flow.

For smooth solutions to the system \eqref{naa}--\eqref{nad},
we can rewrite the total energy equation  \eqref{nad} in the form of
the internal energy. In fact, multiplying  \eqref{nab} by $\u$ and
\eqref{nac} by $\H$ respectively, and summing the resulting equations together, we obtain
\begin{align}\label{naaz}
\frac{d}{dt}\Big(\frac{1}{2}\rho|\u|^2
+\frac{1}{2}|\H|^2\Big)
&+\frac{1}{2}\dv\big(\rho|\u|^2\u\big)+\nabla p\cdot\u \nonumber\\
& =(\nabla\times\H)\times\H\cdot\u
+\nabla\times(\u\times\H)\cdot\H.
\end{align}
Using  the identities
\begin{gather}
 \dv(\H\times(\nabla\times\H))  =|\nabla\times\H|^2-\nabla\times(\nabla\times\H)\cdot\H,\label{naeo}\\
\dv((\u\times\H)\times\H)
=(\nabla\times\H)\times\H\cdot\u+\nabla\times(\u\times\H)\cdot\H,
\label{nae}
\end{gather}
and subtracting \eqref{naaz} from \eqref{nad}, we thus obtain the
internal energy equation
\begin{equation}\label{nagg}
\partial_t (\rho e)+\dv(\rho\u e)+(\dv\u)p=0.
\end{equation}

Using the Gibbs relation
\begin{equation}\label{gibbs}
\theta \mathrm{d}S=\mathrm{d}e +
p\,\mathrm{d}\left(\frac{1}{\rho}\right),
\end{equation}
we can further replace the equation \eqref{nagg} by
\begin{equation}\label{naf}
\partial_t(\rho S)+\dv(\rho  S\u)=0,
\end{equation}
where $S$ denotes the entropy.

 Now, as in \cite{MS01}, we reconsider the equations of state as functions
of $S$ and $p$, i.e., $\rho=R(S,p)$ for
some positive smooth function $R$  defined for all $S$
and $p>0$  satisfying $\partial R/\partial p >0$. For instance,
we have $\rho=p^{1/\gamma}e^{-S/\gamma}$ with $\gamma >1$ for a polytropic gas.  Then, by
utilizing \eqref{naa} together with the constraint $\dv {\H}=0$, the
system \eqref{naa}--\eqref{nac} and \eqref{naf} can be
rewritten as
\begin{align}
    & A(S,p)(\partial_t p+(\u\cdot \nabla) p)+\dv \u=0,\label{nag}\\
& R(S,p)(\partial_t \u+(\u\cdot \nabla) \u)+\nabla p = (\nabla \times \H)\times \H, \label{nah}\\
&  \partial_t {\H} -\cu(\u\times\H)=0, \quad \dv \H=0, \label{nai}\\
 & \partial_tS+(\u\cdot \nabla) S=0,\label{naj}
\end{align}
where
\begin{align*}% \label{asp}
 A(S,p)=\frac{1}{R(S,p)}\frac{\partial R(S,p)}{\partial p}.
\end{align*}

Considering the physical explanation of the incompressible limit, we
introduce the dimensionless parameter $\epsilon$, the Mach number,
and make the following changes of variables:
\begin{gather*}%  \label{scale1}
    p (x, t)=p^\epsilon (x,\epsilon t)=\underline{p} e^{\epsilon
q^\epsilon(x,\epsilon t)}, \quad S (x, t)=S^\epsilon (x,\epsilon t), \\
    {\u} (x,t)=\epsilon \u^\epsilon(x,\epsilon t), \;\;\;
   {\H} (x,t)=\epsilon \H^\epsilon(x,\epsilon t),%\label{scale2}
\end{gather*} for some positive constant $\underline{p}$.
Under these changes of variables, the system (\ref{nag})--(\ref{naj}) becomes
\begin{align}
    & a (S^\epsilon,\epsilon q^\epsilon)(\partial_t q^\epsilon+(\u^\epsilon\cdot \nabla) q^\epsilon)
    +\frac{1}{\epsilon}\dv \u^\epsilon=0,\label{nak}\\
& r (S^\epsilon,\epsilon q^\epsilon)(\partial_t
\u^\epsilon+(\u^\epsilon\cdot \nabla)
\u^\epsilon)+\frac{1}{\epsilon}\nabla q^\epsilon
 =  ( \cu{\H^\epsilon}) \times {\H^\epsilon},  \label{nal}\\
&  \partial_t {\H}^\epsilon -\cu(\u^\epsilon\times\H^\epsilon)=0,
\quad \dv \H^\epsilon=0, \label{nam}\\
 &\partial_tS^\epsilon+(\u^\epsilon\cdot \nabla)S^\epsilon
 =0,\label{nan}
\end{align}
where we have used the abbreviations
\begin{align}
 a (S^\epsilon,\epsilon
q^\epsilon)& :=  A(S^\epsilon, \underline{p}e^{\epsilon
q^\epsilon})\underline{p}e^{\epsilon q^\epsilon}
=\frac{\underline{p}e^{\epsilon
q^\epsilon}}{R(S^\epsilon,\underline{p}e^{\epsilon
 q^\epsilon})}\cdot
 \frac{\partial R(S^\epsilon,s)}{\partial s}\Big|_{s=\underline{p}e^{\epsilon q^\epsilon}},\nonumber %\label{nann}
\\
  r (S^\epsilon,\epsilon q^\epsilon)& :=  \frac{R(S^\epsilon,\underline{p}e^{\epsilon
 q^\epsilon})}{\underline{p}e^{\epsilon q^\epsilon}}.\nonumber% \label{nano}
 \end{align}

Formally, we obtain from \eqref{nak} and \eqref{nal} that $\nabla
q^\epsilon \rightarrow 0$ and $\dv \u^\epsilon\rightarrow 0$ as $\epsilon
\rightarrow 0$. Applying the operator \emph{curl} to \eqref{nal},
using the fact that $\cu \nabla =0$,  and letting
$\epsilon\rightarrow 0$, we
obtain that
\begin{align*}
\cu\big( r(\bar{S},0)(\partial_t \v+\v\cdot \nabla \v)
 -(\cu\bar{\H}) \times \bar{\H} )=0,
\end{align*}
where we have assumed that
$(S^\epsilon,q^\epsilon,\u^\epsilon,\H^\epsilon)$ and
$r (S^\epsilon,\epsilon q^\epsilon)$ converge to
$(\bar{S},0,\v ,\bar{\H})$ and $r(\bar S,0)$ in some sense,
respectively. Finally, applying the identity
\begin{align}\label{naff}
   \cu(  { \u}\times {\H})  =
   {\u} (\dv  {\H})  -   {\H}  (\dv {\u})
 + ( {\H}\cdot \nabla) {\u} - ( {\u}\cdot \nabla) {\H},
 \end{align}
 we expect to get, as $\epsilon\to 0$, the following incompressible ideal non-isentropic MHD
 equations
\begin{align}
&   r(\bar{S},0)(\partial_t \v+(\v\cdot \nabla) \v)
  -(\cu\bar{\H}) \times \bar{\H} +\nabla \pi =0, \label{nao} \\
&  \partial_t \bar{\H}  + ( {\v}  \cdot \nabla) \bar{\H}
   - ( \bar{\H} \cdot \nabla) {\v} =0, \label{nap} \\
 &\partial_t \bar{S} +(\v \cdot \nabla) \bar{S} =0, \label{naq}\\
& \dv\,\v=0,  \quad \dv \bar{\H} =0  \label{nar}
\end{align}
for some function $\pi$.

The aim of this paper is to establish the  above limit process
rigorously in the whole space $\mathbb{R}^d$.

Before stating our main result, we briefly review the previous related
works. We begin with the results for the Euler and Navier-Stokes
equations. For well-prepared initial data, Schochet \cite{S86}
obtained the convergence of the compressible non-isentropic Euler
equations to the incompressible non-isentropic Euler equations in a
bounded domain for local smooth solutions. For general initial data,
M\'{e}tivier and Schochet \cite{MS01} proved rigorously the
incompressible limit   of the compressible non-isentropic Euler
equations  in the whole space $\R^d$. There are two key points in
the article \cite{MS01}. First, they obtained the uniform estimates in
Sobolev norms for the acoustic component of the solutions, which are
propagated by a wave equation with unknown variable coefficients.
Second, they proved that the local energy of the acoustic wave
decays to zero in the whole space case. This approach was extended
to the non-isentropic Euler equations in an exterior domain and the
full Navier-Stokes equations in the whole space by Alazard in
\cite{A05} and \cite{A06}, respectively, and to the dispersive
Navier-Stokes equations by Levermore, Sun and Trivisa \cite{LST}.
For the spatially periodic case, M\'{e}tivier and Schochet \cite{MS03} showed
the incompressible limit of the
 one-dimensional non-isentropic Euler equations with general data.
Compared to the non-isentropic case, the treatment of
the propagation of oscillations in the isentropic case is simpler
and there is an extensive literature on this topic. For example, see Ukai
\cite{U86}, Asano \cite{As87}, Desjardins and Grenier \cite{DG99} in
the whole space; Isozaki \cite{I87,I89} in an exterior domain;
Iguchi \cite{Ig97} in the half space; Schochet \cite{S94} and
Gallagher \cite{Ga01} in a periodic domain; and Lions and Masmoudi
\cite{LM98}, and Desjardins, et al. \cite{DGLM} in a bounded domain.
Recently, Jiang and Ou \cite{JO} investigated the incompressible
limit of the non-isentropic Navier-Stokes equations with zero heat
conductivity and well-prepared initial data in three-dimensional
bounded domains. The justification of the incompressible limit of the
non-isentropic Euler or Navier-Stokes equations with general initial
data in a bounded domain or a multi-dimensional periodic domain is
still open. The interested reader can refer to \cite{BDGL} on
formal computations for the case of viscous polytropic gases and
\cite{MS03,BDG} on some analysis for the non-isentropic Euler equations
 in a multi-dimensional periodic domain.
For more results on the incompressible limit of the Euler and Navier-Stokes equations,
please see the monograph \cite{FN} and the survey articles \cite{Da05,M07,S07}.

For the isentropic compressible MHD equations, the justification of
the low Mach limit has been given in several aspects. In \cite{KM},
Klainerman and Majda first studied the incompressible limit of the
isentropic compressible ideal MHD equations in the spatially periodic case with
well-prepared initial data. Recently,
 the incompressible limit of  the isentropic viscous (including both viscosity
 and magnetic diffusivity) of compressible MHD equations with
 general data was studied in \cite{HW3,JJL1,JJL2}. In
\cite{HW3}, Hu and Wang obtained the convergence of weak solutions
of the compressible viscous MHD equations in bounded,
spatially periodic domains and the whole space, respectively. In \cite{JJL1},
the authors employed the modulated energy method to verify the
limit of weak solutions of the compressible MHD equations in the
torus to the strong solution of the incompressible viscous or
partial viscous MHD equations (the shear viscosity coefficient is
zero but the magnetic diffusion coefficient is a positive constant).
In \cite{JJL2}, the authors obtained the convergence of weak
solutions of the viscous compressible MHD equations to the strong
solution of the ideal incompressible MHD equations in the whole
space by using the dispersion property of the wave equation if both
shear viscosity and magnetic diffusion coefficients go to zero.

For the full compressible MHD equations, the incompressible limit
  in the framework of the so-called variational
solutions was studied in \cite{K10,KT,NRT}. Recently, the authors \cite{JJL3}
justified rigourously the low Mach number limit of classical solutions to the
  ideal or full compressible non-isentropic MHD equations with small entropy or
  temperature variations. When the heat conductivity and large temperature variations are present,
 the low Mach number limit for the full compressible non-isentropic MHD equations
 was shown in \cite{JJLX}. We emphasize here that the arguments in \cite{JJLX}
 are completely different from the present paper (at least in the derivation of the uniform estimates),
 and depend essentially on the positivity of fluid viscosities, magnetic diffusivity, and
 heat conductivity coefficients.

As aforementioned, in this paper we want to establish rigorously the
limit as $\epsilon\to 0$ for the system \eqref{nak}--\eqref{nan}. In this case, there are additional magnetic field terms appearing in the momentum equations, compared with non-isentropic Euler equations.
When applying the non-standard derivative operators  to the momentum equations, we have to
cancel the the most-differentiated terms to close our estimates with the help of the magnetic field equations. However, it is difficult to show this cancelation due to the strong coupling of hydrodynamic motion and magnetic field. To surround such difficulties, new ideas and techniques are required in the present paper, and we
shall explain these briefly after stating our result.

Now, we supplement the system \eqref{nak}--\eqref{nan} with initial conditions
\begin{align}
(S^\epsilon,q^\epsilon,\u^\epsilon,\H^\epsilon)|_{t=0}
=(S^\epsilon_0,q^\epsilon_0,\u^\epsilon_0,\H^\epsilon_0).
\label{nas}
\end{align}
Our main result thus reads as follows.
%%%%%%%%%%%%%%%%%%%%%%%%%%%%%%%%%%%%%%%%%%%%%%
\begin{thm}\label{mth}
  Suppose that the initial data
 $S^\epsilon_0,q^\epsilon_0,\u^\epsilon_0,\H^\epsilon_0$
satisfy
\begin{align}\label{nat}
\|(S^\epsilon_0,q^\epsilon_0,\u^\epsilon_0,\H^\epsilon_0)\|_{H^4(\R^d)}\leq M_0
 \end{align}
for some constant $M_0>0$. Then there exist  constants $T>0$ and $\epsilon_0\in (0,1)$ such that for any
$\epsilon\in (0,\epsilon_0]$, the Cauchy problem \eqref{nak}--\eqref{nan}, \eqref{nas}
has a unique solution $(S^\epsilon,q^\epsilon,\u^\epsilon,\H^\epsilon)\in
C^0([0,T],H^4(\R^d))$, and there exists a positive constant $N$,
depending only on $T$, $\epsilon_0$, and $M_0$, such that
\begin{align}
 \|\big(S^\epsilon,q^\epsilon,\u^\epsilon,\H^\epsilon\big)(t)\|_{H^4(\R^d)}\leq
 N, \quad \forall\, t\in [0,T].
\label{nav}
\end{align}
Furthermore, suppose that the initial data $(S^\epsilon_0,\u^\epsilon_0,\H^\epsilon_0)$ converge in $H^4(\R^d)$ to $( S_0,
\linebreak \w_0,  \H_0)$ as $\epsilon\rightarrow 0$, and that there exist positive constants $\underline S$, $N_0$
and $\delta$, such that $S^\epsilon_0$ satisfies
\begin{equation}\label{naw}
|S^\epsilon_0(x)-\underline{S}\,\, |\leq   {N}_0 |x|^{-1-\delta},
\quad |\nabla S^\epsilon_0(x)|\leq  N_0 |x|^{-2-\delta},
\end{equation}
then  the sequence of solutions
$(S^\epsilon,q^\epsilon,\u^\epsilon,\H^\epsilon)$ converges weakly in
  $L^\infty(0,T; H^4(\R^d))$ and strongly in
$L^2(0,T;H^{s'}_{\mathrm{loc}}(\R^d))$ for all $s'<4$ to a limit
$(\bar S,0,\v,\bar\H)$, where \linebreak  $(\bar S,\v,\bar{\H})$
is the unique solution in $C([0,T],H^4(\R^d))$ of \eqref{nao}--\eqref{nar} with
initial data $(\bar S,\v,\bar{\H})|_{t=0} =( S_0,\w_0,\H_0)$,
where $\w_0\in H^4(\R^d)$ is determined by
\begin{equation}\label{nax}
   \dv\,\w_0=0,  \,\; \cu(r(S_0,0)\w_0)=\cu(r(S_0,0)\v_0),
\;\, r(S_0,0):= \lim_{\epsilon \rightarrow
0}r(S^\epsilon_0,0).
\end{equation}
The function $\pi\in C([0,T]\times \R^d)$ satisfies $\nabla\pi\in
C([0,T],H^{3}(\R^d))$.
\end{thm}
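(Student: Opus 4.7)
The plan is to prove the theorem in two stages: first establish uniform $H^4$ bounds on a time interval independent of $\epsilon$, and then extract the incompressible limit by combining weak compactness with local decay of the acoustic component in the spirit of M\'etivier--Schochet \cite{MS01}. For the uniform existence, I would first check that the singular system \eqref{nak}--\eqref{nan} is symmetric hyperbolic after multiplying \eqref{nak} by $1$, \eqref{nal} by $r/a$, etc., so that Kato's theory yields classical solutions for each fixed $\epsilon>0$. The key task is to propagate the $H^4$ norm uniformly in $\epsilon$: applying $\partial^\alpha$ with $|\alpha|\le 4$ and constructing the weighted energy
\begin{equation*}
E(t)=\sum_{|\alpha|\le 4}\int_{\R^d}\Big(a|\partial^\alpha q^\epsilon|^2+r|\partial^\alpha\u^\epsilon|^2+|\partial^\alpha\H^\epsilon|^2+|\partial^\alpha S^\epsilon|^2\Big)dx,
\end{equation*}
the singular operators $\epsilon^{-1}\dv$ and $\epsilon^{-1}\nabla$ are skew-adjoint and cancel between the $q^\epsilon$- and $\u^\epsilon$-estimates. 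Commutators with $a(S^\epsilon,\epsilon q^\epsilon)$ and $r(S^\epsilon,\epsilon q^\epsilon)$ carry explicit factors of $\epsilon\,\partial q^\epsilon$, hence remain $O(1)$ by Moser's calculus provided $\epsilon\|q^\epsilon\|_{H^4}$ stays bounded, which I would close by a standard continuity argument.

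The main obstacle, as flagged by the authors before the theorem, is the top-order estimate for the coupled $(\u^\epsilon,\H^\epsilon)$ block. Applying $\partial^\alpha$ with $|\alpha|=4$ to \eqref{nal} and \eqref{nam} and testing against $\partial^\alpha\u^\epsilon$ and $\partial^\alpha\H^\epsilon$ respectively, one produces top-order magnetic terms of the form $\int(\H^\epsilon\cdot\nabla\partial^\alpha\H^\epsilon)\cdot\partial^\alpha\u^\epsilon\,dx$ and $\int(\H^\epsilon\cdot\nabla\partial^\alpha\u^\epsilon)\cdot\partial^\alpha\H^\epsilon\,dx$, which individually are not signed. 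Using $\dv\H^\epsilon=0$ together with the identity \eqref{naff} and integration by parts, the two contributions cancel up to commutators in which at most four derivatives fall on $\H^\epsilon$ and one derivative is redistributed onto a lower-order factor of $\u^\epsilon$; all such commutators are controlled by Moser's inequality in terms of $E(t)$. This is the delicate cancellation that the non-isentropic Euler analysis does not see. Combining it with the standard pressure--entropy estimates gives $dE/dt\le C(1+E)^{3/2}$ uniformly in $\epsilon$, hence the bound \eqref{nav} on a common time interval $[0,T]$.

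For the passage to the limit, the bound \eqref{nav} and the equations \eqref{nak}--\eqref{nan} yield weak-$*$ convergence of $(S^\epsilon,\u^\epsilon,\H^\epsilon)$ in $L^\infty(0,T;H^4)$ along a subsequence and strong convergence $q^\epsilon\to 0$ in $L^\infty(0,T;H^3_{\mathrm{loc}})$ via \eqref{nak}. The hard remaining step is strong $L^2_t H^{s'}_{\mathrm{loc}}$ compactness of $\u^\epsilon$: its divergence-free part is relatively compact by the Aubin--Lions lemma applied to \eqref{nal}, but its gradient part is an acoustic wave and requires the M\'etivier--Schochet decay machinery. I would eliminate $\u^\epsilon$ between \eqref{nak} and \eqref{nal} to obtain a variable-coefficient wave equation for $q^\epsilon/\epsilon$ whose coefficients $a(S^\epsilon,\epsilon q^\epsilon)$, $r(S^\epsilon,\epsilon q^\epsilon)$ converge on compacta. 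The decay hypothesis \eqref{naw}, transported along \eqref{nan}, ensures that these coefficients tend to constants at spatial infinity at the algebraic rate needed to apply the local-energy decay result of \cite{MS01}, forcing the acoustic part of $\u^\epsilon$ to vanish strongly in $L^2_{t,\mathrm{loc}}$; the magnetic force $(\cu\H^\epsilon)\times\H^\epsilon$ is a lower-order perturbation in this argument since $\H^\epsilon$ is already bounded in $L^\infty_t H^4$. Strong compactness of $\H^\epsilon$ then follows from \eqref{nam}, which is linear in $\H^\epsilon$ once $\u^\epsilon$ is fixed. Passing to the limit in the nonlinearities and taking the curl produces \eqref{nao}--\eqref{nar}; the pressure $\pi$ is recovered from $-\Delta\pi=\dv\big((\cu\bar\H)\times\bar\H-r(\bar S,0)(\v\cdot\nabla)\v\big)$, and uniqueness of $(\bar S,\v,\bar\H)$ together with the initial identification \eqref{nax} forces the whole family, not merely a subsequence, to converge.
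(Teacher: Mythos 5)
There is a genuine gap in your uniform-estimate step, and it is precisely the difficulty that the whole Métivier--Schochet machinery (and this paper's adaptation of it) exists to overcome. You propose to apply plain $\partial^\alpha$, $|\alpha|\le 4$, form the weighted energy with weights $a(S^\epsilon,\epsilon q^\epsilon)$, $r(S^\epsilon,\epsilon q^\epsilon)$, and dismiss the commutators with these coefficients on the grounds that they ``carry explicit factors of $\epsilon\,\partial q^\epsilon$.'' They do not: $a$ and $r$ depend on $S^\epsilon$ as well as on $\epsilon q^\epsilon$, and for general (ill-prepared, large entropy variation) data $\nabla a$ contains the $O(1)$ term $a_S\nabla S^\epsilon$. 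The resulting commutator sits against the singular term, e.g.\ $\epsilon^{-1}\langle a\,\partial^\alpha q,[\partial^\alpha,a^{-1}]\dv\u\rangle$, and since neither $\dv\u^\epsilon$ nor $\partial_t q^\epsilon$ is $o(1)$ (indeed $\partial_t q^\epsilon=O(\epsilon^{-1})$, cf.\ \eqref{nbba}), this contribution is $O(\epsilon^{-1})$ and destroys the uniform bound $dE/dt\le C(1+E)^{3/2}$. Your argument would be correct in the isentropic case, where $a=a(\epsilon q)$ and $\nabla a=O(\epsilon)$ genuinely cancels the $\epsilon^{-1}$, but not here. The paper instead estimates only the $L^2$ norm directly (where no commutators arise, Lemma \ref{NLb}), recovers the full $H^4$ norm through the Helmholtz-type splitting \eqref{nbh}--\eqref{nbha} into $\|\{L_E(\partial_x)\}^\sigma\U\|_0$ and $\|\cu(r_0\u)\|_{\sigma-1}$, and commutes the operator $\{E^{-1}L(\partial_x)\}^\sigma$ (which commutes exactly with the singular term) rather than $\partial^\alpha$ through the equations. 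This is also where the restriction that $s$ be even enters: only for even $\sigma$ is the top-order part of $\{L_E\}^\sigma$ acting on $\u$ the pure operator $(ar)^{-2}(\nabla\dv)^2$, so that the dangerous fifth-order magnetic terms it generates can be matched and cancelled against the corresponding terms produced by applying the same operators to the induction equation (Lemmas \ref{NLee}, \ref{NLg}, \ref{lem7}). Your proposal never addresses why $s=4$ must be even, which the authors flag as the crux.

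Your instinct about the top-order $\u$--$\H$ cancellation via $\dv\H^\epsilon=0$ and integration by parts is the right one, but it has to be executed for the operators $(\nabla\dv)^2$ and $\partial^\alpha\cu$ coming from the decomposition, not for plain $\partial^\alpha$ applied to \eqref{nal} (which, as above, is not available). The limit-passage half of your proposal is essentially the paper's argument and is sound in outline: strong compactness of the solenoidal part from the vorticity equation \eqref{nbfg}, the variable-coefficient wave equation \eqref{cag} for $q^\epsilon$ with source tending to zero in $L^2$, the transported decay \eqref{naw} of the coefficients to invoke Lemma \ref{LD}, recovery of $\pi$, and uniqueness of the limit system to upgrade subsequential convergence to convergence of the full family.
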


We briefly describe the strategy of the proof.
 The proof of Theorem \ref{mth} includes two main steps:
uniform estimates of the solutions, and the convergence from the
original scaling equations to the limiting ones. Once we have
established the uniform estimates \eqref{nav} in
Theorem \ref{mth}, the convergence of solutions is easily obtained by
using the local energy decay theorem for fast waves in the whole
space shown by M\'{e}tivier and Schochet in \cite{MS01}.
Thus, the main task in the present paper is to obtain the uniform
estimates \eqref{nav}. For this purpose, we shall modify the
approach developed in \cite{MS01}. In the process of
deriving uniform estimates, when we perform the operator
$(\{E \}^{-1}L(\partial_x))^\sigma$ ($\sigma=1,2,3,4$) to the continuity and momentum
equations, or the operator $\emph{curl}$ to the momentum equations,
one order higher spatial derivatives arise for the magnetic field, and the
% which have to be canceled in order to close the estimates.
 key point in the derivation of the uniform estimates is to cancel these troublesome
magnetic terms in the fluid equations by the corresponding  terms
in the magnetic field equations. Only those magnetic terms of highest order derivatives
are needed to be controlled (canceled). To achieve this cancellation, we restrict the Sobolev index $s$ to be even,
so that the highest order derivatives applied to the momentum equations are not intertwined
with the pressure equation, and we then apply the same highest order derivative operators to
the magnetic field equations. We remark that the divergence-free property of the magnetic field
plays an important role in the process of the uniform estimates.
%%%%%%%%%%%%%%%%%%%%%%%%%%%%%%%
\begin{rem}
In the proof of Theorem \ref{mth}, it is crucial to appropriately choose the index $s$ of the
 Sobolev space $H^s$ to be even (here we take $s=4$ for simplicity).
The result is still valid for any even integer $s> {d}/{2}+1$.
In contrast, for the non-isentropic Euler equations, the same result holds
in $H^s$  for any integer $s> {d}/{2}+1$ (see \cite{MS01}). The restriction on the
index is due to the strong coupling of hydrodynamic motion and magnetic field.
\end{rem}

\begin{rem}
  The uniform estimates obtained in this paper are still valid  when we consider our problem in
  a torus $\mathbb{T}^d \ (d=2,3)$ .  However, it is an open problem to prove rigorously
  the incompressible limit of the compressible non-isentropic ideal MHD equations in higher dimension $\mathbb{T}^d$ due to the loss of dispersive estimates for
  the wave equations,
  see \cite{MS03} for some
  discussions on the non-isentropic Euler equations. %Recently, the authors studied
%the incompressible limit of the one-dimensional planar compressible ideal
%magnetohydrodynamic equations \cite{JJL4}.
\end{rem}

\begin{rem} We point out that our arguments in this paper can be modified
slightly to the case of the non-isentropic MHD equations with nonnegative fluid viscosities
or magnetic diffusivity but without heat conductivity. In this case we do not require that the index $s$
of the Sobolev space $H^s$ must be even.
 The reason is that we can use the diffusion term (terms) to control the highest troublesome terms.
 For example, considering the situation that the magnetic diffusivity is zero and the
fluid viscosity coefficients are positive, we can define
 \begin{align*}
\mathcal{M}_\epsilon(T) :=\,&  { \mathcal{N}_\epsilon(T)}^2
+\int_0^T\|\u^\epsilon\|_{s+1}^2\textrm{d}\tau
\end{align*}
with
\begin{align*}
  \mathcal{N}_\epsilon(T) :=\,&  \sup_{ t\in  [0,T
]}\|(S^\epsilon,q^\epsilon,\u^\epsilon,\H^\epsilon)(t)\|_{s}, \quad s> {d}/{2}+2,
\end{align*}
and employ the same arguments with slight modifications to obtain the same result as Theorem \ref{prop}.

\end{rem}

\begin{rem}
It seems that the proof of the Theorem \ref{mth} can not be extended directly to
the full compressible MHD equations where
the heat conductivity is positive. In this situation we can not write the total energy equation
as a transport equation of entropy, which plays a key role in the proof.
Instead, it is more convenient to use the temperature as an unknown in the total energy
equation when the heat conductivity is positive, see \cite{JJLX} for more details.
\end{rem}

%%%%%%%%%%%%%%%%%%%%%%%%%%%%
\emph{Notions.}  We denote by $\langle \cdot,\cdot\rangle$ the standard inner
product in $L^2(\R^d)$ with norm $\|f\|^2=\langle f,f\rangle$ and by
 $H^k$ the usual Sobolev space $W^{k,2}(\R^d)$ with norm $\|\cdot\|_{k}$. In particular,
$\|\cdot\|_0=\|\cdot\|$. The notation $\|(A_1,  \dots, A_k)\|$ means
the summation of $\|A_i\|$ ($i=1,\cdots,k$), and it also applies to other norms.
For a multi-index $\alpha = (\alpha_1,\dots,\alpha_d)$, we denote
$\partial^\alpha =\partial^{\alpha_1}_{x_1}\dots\partial^{\alpha_d}_{x_d}$ and
$|\alpha|=|\alpha_1|+\cdots+|\alpha_d|$. We shall omit the spatial
domain $\R^d$ in integrals for convenience. We use the symbols $K$
or $C_0$ to denote generic positive constants, and $C(\cdot)$
and $\tilde{C}(\cdot)$ to denote smooth functions of their arguments which may
vary from line to line.

This paper is arranged as follows.  In Section 2, we establish the uniform boundeness of
the solutions and prove the existence part of Theorem \ref{mth}.
In Section 3, we use the decay of the local energy to the acoustic wave equations
  to prove the convergent part of Theorem \ref{mth}.

\section{Uniform estimates}

Throughout this section $\epsilon>0$, will be fixed, and
the solution $(S^\epsilon,q^\epsilon,\u^\epsilon,\H^\epsilon)$  will be denoted   by
$(S,q,\u,\H)$ and the corresponding superscript $\epsilon$ in other notations are omitted
for simplicity of presentation.

In view of \cite{MS01} and the classical local existence result in \cite{Ma} for hyperbolic systems,
we see that the key point in the proof of the existence part of Theorem \ref{mth} is to
 establish  the uniform estimate \eqref{nav}, which can be
deduced from the following \emph{a priori} estimate.
%%%%%%%%%%%%%%%%%%%%%%%%%%%%%%%%%%%%%%%%%%%%%%%%%%%%%%%%%%%%
\begin{thm}\label{prop}
Let
$(S,q,\u,\H)\in
C([0,T],H^4(\R^d))$ be a solution to \eqref{nak}--\eqref{nan}, \eqref{nas}.
Then there exist constants $C_0>0$, $0<\epsilon_0<1$ and an increasing function $C(\cdot)$ from $[0,\infty)$ to
$[0,\infty)$, such that for all
$\epsilon \in (0,\epsilon_0]$ and $t\in [0,T]$,
\begin{align}\label{nbc}
\mathcal{M}(T) \leq C_0 + (T +\epsilon )C(\mathcal{M}(T)),
 \end{align}
 where

\begin{align}
  \mathcal{M}(T) :=\,&  \sup_{ t\in  [0,T
]}\|(S,q,\u,\H)(t)\|_{4}.\label{nbbbb}
\end{align}
\end{thm}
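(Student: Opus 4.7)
The plan is to prove \eqref{nbc} by a weighted high-order energy estimate that (i) exploits the symmetric-hyperbolic structure of the $(q,\u)$ subsystem to absorb the singular $1/\epsilon$ terms, and (ii) relies on a delicate top-order cancellation between the magnetic-force term in the momentum equation and the corresponding term in the magnetic-field equation. Define
\begin{equation*}
E(t) := \sum_{|\alpha|\leq 4}\frac{1}{2}\!\int\!\bigl[a(S,\epsilon q)|\partial^\alpha q|^2 + r(S,\epsilon q)|\partial^\alpha \u|^2 + |\partial^\alpha \H|^2 + |\partial^\alpha S|^2\bigr]\,dx,
\end{equation*}
which, under an a priori $L^\infty$-bound on $(S,\epsilon q)$, is equivalent to $\|(S,q,\u,\H)\|_4^2$. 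The goal is to show $\tfrac{d}{dt}E\leq C(\mathcal{M}(T))$ modulo an $O(\epsilon)$ correction, so that integration in time yields \eqref{nbc}.

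I would first dispatch the easier pieces. The entropy equation \eqref{nan} is pure transport, so standard commutator estimates give $\|S(t)\|_4 \leq \|S_0\|_4 + \int_0^t C(\mathcal{M}(\tau))\,d\tau$. Next, applying $\partial^\alpha$ with $|\alpha|\leq 4$ to \eqref{nak} and \eqref{nal} and pairing with $\partial^\alpha q$ and $\partial^\alpha\u$ respectively, the leading $1/\epsilon$ contributions combine to $\tfrac{1}{\epsilon}\int\bigl(\partial^\alpha q\,\partial^\alpha\dv\u + \nabla\partial^\alpha q\cdot\partial^\alpha\u\bigr)dx$, which vanishes by integration by parts --- the standard symmetric-hyperbolic trick that removes the singular coefficient at leading order.

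The crux is the top-order magnetic coupling. From the momentum equation, $\partial^\alpha$ with $|\alpha|=4$ paired with $\partial^\alpha\u$ produces the leading Lorentz contribution
\begin{equation*}
I_1 := \int(\cu\partial^\alpha\H)\times\H\cdot\partial^\alpha\u\,dx,
\end{equation*}
of order $|\alpha|+1=5$ in $\H$ and thus not bounded by $\mathcal{M}(T)$ alone. From \eqref{nam}, $\partial^\alpha$ paired with $\partial^\alpha\H$ produces the leading term
\begin{equation*}
I_2 := \int\partial^\alpha\H\cdot\cu(\partial^\alpha\u\times\H)\,dx,
\end{equation*}
of order $|\alpha|+1$ in $\u$. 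Using $(a\times b)\cdot c = a\cdot(b\times c)$ and integration by parts for $\cu$ on $\R^d$ gives $I_1 = \int\partial^\alpha\H\cdot\cu(\H\times\partial^\alpha\u)\,dx$; anti-commutativity of the cross product then yields $I_1 = -I_2$, so the two troublesome contributions cancel exactly. The other leading magnetic piece $\int\partial^\alpha\H\cdot\cu(\u\times\partial^\alpha\H)\,dx$ is controlled by expanding via \eqref{naff}, using $\dv\partial^\alpha\H = \partial^\alpha\dv\H = 0$ to kill the $\u\dv\partial^\alpha\H$ piece, and integrating by parts on the $(\u\cdot\nabla)\partial^\alpha\H$ term, leaving only contributions bounded by $\|(\u,\H)\|_{W^{1,\infty}}\|\partial^\alpha\H\|^2\leq C(\mathcal{M}(T))$. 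The divergence-free constraint $\dv\H = 0$ is essential here.

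What remains are commutators involving $a(S,\epsilon q)$, $r(S,\epsilon q)$ and the transport terms. The apparent singularity from $\partial_t q,\partial_t\u\sim 1/\epsilon$ is tamed because spatial derivatives of $a,r$ carry a compensating factor of $\epsilon$ through the $\epsilon q$ dependence; this cancels the $1/\epsilon$, producing bounded $O(1)$ terms plus $O(\epsilon)$ corrections --- the latter being the source of the $\epsilon C(\mathcal{M}(T))$ contribution in \eqref{nbc}, while the former integrate in time to the $T\,C(\mathcal{M}(T))$ contribution. Moser-type Sobolev calculus handles all cross-terms. The main obstacle is precisely the top-order magnetic cancellation above; its compatibility with the symmetric-hyperbolic pairing is what forces the Sobolev index to be even, since at intermediate orders $\sigma=1,3$ the M\'etivier--Schochet-type operator $(E^{-1}L(\partial_x))^\sigma$ swaps the scalar and vector components of $(q,\u)$, destroying the structure that enables the cancellation.
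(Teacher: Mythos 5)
Your top-order magnetic cancellation ($I_1=-I_2$ via $(\mathbf{a}\times\mathbf{b})\cdot\mathbf{c}=\mathbf{a}\cdot(\mathbf{b}\times\mathbf{c})$ and integration by parts for \emph{curl}) is correct in itself, and the skew-symmetry that kills the leading $1/\epsilon$ pairing is also fine. But the proof has a genuine gap at the step where you dismiss the commutators with the coefficients: you assert that ``spatial derivatives of $a,r$ carry a compensating factor of $\epsilon$ through the $\epsilon q$ dependence.'' This is false here, because $a=a(S,\epsilon q)$ and $r=r(S,\epsilon q)$ also depend on the entropy $S$, which is \emph{not} small and whose spatial derivatives are $O(1)$: $\nabla a = a_S\nabla S + \epsilon a_q\nabla q$. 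Consequently, in your energy identity the term $\int \partial^\alpha q\,[\partial^\alpha,a]\partial_t q\,dx$ (and its analogue for $\u$) contains contributions $\partial^\beta a\,\partial^{\alpha-\beta}\partial_t q$ with $|\beta|\geq 1$, where $\partial_t q\sim -\tfrac{1}{\epsilon a}\dv\u$ is genuinely of size $1/\epsilon$ and $\partial^\beta a$ is only $O(1)$. These commutators are $O(1/\epsilon)$, do not cancel against anything in the $\u$-equation, and cannot be absorbed into $(T+\epsilon)C(\mathcal{M}(T))$. Your argument would prove the estimate in the isentropic case, but for non-isentropic flow it collapses exactly at the point that makes the problem hard.

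This is precisely the obstruction that forces the paper (following M\'etivier--Schochet) to abandon ordinary derivatives $\partial^\alpha$ for the acoustic part and instead commute the system with the weighted operators $\{E^{-1}L(\partial_x)\}^\sigma$, for which the commutator $\mathbf{f}_\sigma=[\partial_t+\u\cdot\nabla,\{L_E\}^\sigma]\U$ involves only $\partial_t a$, $\partial_t r$ (bounded, since $\partial_t S$ and $\epsilon\partial_t q$ are bounded) rather than $1/\epsilon$-sized quantities; the full $H^4$ norm is then recovered from $\|\{L_E\}^\sigma\U\|_0$, $\|\cu(r_0\u)\|_{\sigma-1}$ and lower norms via the elliptic estimate of Lemma \ref{NLi}, with a separate vorticity estimate on $\cu(r_0\u)$. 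Your explanation of why $s$ must be even is also symptomatic of the same confusion: in your scheme with plain $\partial^\alpha$ no parity restriction would ever arise; the restriction comes from the fact that $\{L_E\}^{4}$ acts on $\u$ as $(ar)^{-2}(\nabla\dv)^2$ only when the power is even, which is what allows the leftover fifth-order magnetic terms in Lemmas \ref{NLee} and \ref{NLg} to be matched and cancelled against the corresponding terms in the magnetic-field estimate of Lemma \ref{lem7}. To repair your proof you would need to replace the $\partial^\alpha$ energy for $(q,\u)$ by this two-tier structure (weighted acoustic operators plus curl estimate) and then verify the magnetic cancellation in that setting, which is essentially the content of the paper's argument.
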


The remainder of this section is devoted to establishing \eqref{nbc}. In the calculations
that follow, we always suppose that the assumptions in Theorem
\ref{mth} hold. We consider a solution $(S,q,\u,\H)\in C([0,T],H^4(\R^d))$ to the problem
\eqref{nak}--\eqref{nan}, \eqref{nas} with the initial data satisfying \eqref{nat}.

First, we have the following estimate of the entropy $S$, which was obtained in \cite{MS01}.

\begin{lem}  \label{NLa}
There exist a constant $C_0>0$   and a function $C(\cdot)$,
independent of $\epsilon$,  such that for all
$\epsilon \in (0,1]$ and $t\in [0,T]$,
\begin{align}\label{nbe}
\|S(t)\|^2_{4}\leq  C_0 + tC(\mathcal{M}(T)).
\end{align}
\end{lem}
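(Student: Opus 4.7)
The plan is to exploit the fact that the entropy equation \eqref{nan}, $\partial_t S + (\u\cdot\nabla)S = 0$, is a pure linear transport equation for $S$ with transporting velocity $\u$, and to carry out a standard $H^4$ energy estimate, with all quantities involving $\u$ controlled by $\mathcal{M}(T)$. Concretely, for each multi-index $\alpha$ with $|\alpha|\leq 4$ I would apply $\partial^\alpha$ to \eqref{nan} and obtain
\begin{equation*}
\partial_t(\partial^\alpha S) + (\u\cdot\nabla)(\partial^\alpha S) = -[\partial^\alpha,\u\cdot\nabla]S,
\end{equation*}
then take the $L^2$ inner product with $\partial^\alpha S$ and integrate by parts in the transport term to get
\begin{equation*}
\frac{1}{2}\frac{d}{dt}\|\partial^\alpha S\|^2 = \frac{1}{2}\int (\dv\u)\,|\partial^\alpha S|^2\,dx - \int \partial^\alpha S\,[\partial^\alpha,\u\cdot\nabla]S\,dx.
\end{equation*}

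Next I would bound the right-hand side by $C(\mathcal{M}(T))\|S\|_4^2$. For the first term, since $d=2,3$ and $H^4\hookrightarrow W^{1,\infty}$, we have $\|\dv\u\|_{L^\infty}\leq C\|\u\|_4\leq C(\mathcal{M}(T))$, so it is controlled by $C(\mathcal{M}(T))\|\partial^\alpha S\|^2$. For the commutator term I would invoke the standard Moser/Kato--Ponce estimate
\begin{equation*}
\|[\partial^\alpha,\u\cdot\nabla]S\|_0 \leq C\bigl(\|\nabla\u\|_{L^\infty}\|S\|_4 + \|\u\|_4\|\nabla S\|_{L^\infty}\bigr) \leq C(\mathcal{M}(T))\|S\|_4,
\end{equation*}
again using Sobolev embedding on the $L^\infty$ factors. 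Summing the resulting inequality over $|\alpha|\leq 4$ gives
\begin{equation*}
\frac{d}{dt}\|S(t)\|_4^2 \leq C(\mathcal{M}(T))\|S(t)\|_4^2 \leq C(\mathcal{M}(T)),
\end{equation*}
where in the last step I absorb $\|S\|_4^2\leq \mathcal{M}(T)^2$ into the function $C(\cdot)$.

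Integration in time from $0$ to $t\leq T$, together with the initial bound $\|S(0)\|_4^2\leq M_0^2 =: C_0$ coming from \eqref{nat}, then yields
\begin{equation*}
\|S(t)\|_4^2 \leq C_0 + tC(\mathcal{M}(T)),
\end{equation*}
which is the desired \eqref{nbe}. The estimate is uniform in $\epsilon\in(0,1]$ because the entropy equation \eqref{nan} carries no explicit $\epsilon$, and the only $\epsilon$-dependence enters through the coefficient $\u=\u^\epsilon$, whose $H^4$ norm is already controlled by $\mathcal{M}(T)$ by definition. There is no real obstacle here, since there is no coupling to the pressure, magnetic field, or the singular $1/\epsilon$ terms; the only care required is the use of Moser-type commutator bounds rather than naive Leibniz, which is the standard way to avoid losing a derivative on the top-order term.
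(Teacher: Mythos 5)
Your proof is correct and is essentially the standard argument: the paper does not reproduce a proof but cites M\'etivier--Schochet \cite{MS01}, where this lemma is established by exactly the $H^4$ transport energy estimate with Moser-type commutator bounds that you carry out. The only (routine) point left implicit is that the formal computation of $\frac{d}{dt}\|\partial^\alpha S\|^2$ at top order is justified by a standard mollification argument, which is customary to omit.
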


The following $L^2$-bound of
$(q,\u,\H)$ can be obtained directly using the energy method due
to the skew-symmetry of the singular term in the system and the special structure of
coupling between the magnetic field and fluid velocity. This $L^2$-bound
is very important in our arguments, since it is the  starting point to the following  induction
analysis used to get the desired Sobolev estimates.

\begin{lem} \label{NLb}
  There exist a constant $C_0>0$ and an increasing function $C(\cdot)$ independent of
  $\epsilon$, such that for all $\epsilon \in (0,1]$ and $t\in [0,T]$,
\begin{align}\label{nbf}
\|(q,\u,\H)(t)\|^2 \leq  C_0 + tC(\mathcal{M}(T)).
\end{align}
\end{lem}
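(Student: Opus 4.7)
The plan is to run a weighted $L^2$ energy estimate, with the weights chosen so that (i) the two singular $1/\epsilon$ terms cancel by integration by parts and (ii) the magnetic/fluid coupling terms cancel by the identity \eqref{nae}. Specifically, multiply \eqref{nak} by $q$, \eqref{nal} by $\u$, and \eqref{nam} by $\H$, and define the natural energy functional
\begin{equation*}
\mathcal{E}(t):=\frac12\int\bigl(a(S,\epsilon q)\,q^2+r(S,\epsilon q)\,|\u|^2+|\H|^2\bigr)\,dx.
\end{equation*}
Since $a,r$ are smooth positive functions of their arguments and $(S,q)$ is controlled by $\mathcal{M}(T)$ in $L^\infty$ (through Sobolev embedding), there exist constants $c_1,c_2>0$ depending on $\mathcal{M}(T)$ with $c_1\|(q,\u,\H)\|^2\le\mathcal{E}(t)\le c_2\|(q,\u,\H)\|^2$, so controlling $\mathcal{E}$ is equivalent to controlling $\|(q,\u,\H)\|^2$.

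Next I would compute $d\mathcal{E}/dt$. The two singular contributions give
\begin{equation*}
-\frac{1}{\epsilon}\int q\,\dv\u\,dx-\frac{1}{\epsilon}\int\u\cdot\nabla q\,dx=-\frac{1}{\epsilon}\int\dv(q\u)\,dx=0,
\end{equation*}
so no $1/\epsilon$ loss occurs. The two MHD coupling contributions give
\begin{equation*}
\int\u\cdot(\cu\H)\times\H\,dx+\int\H\cdot\cu(\u\times\H)\,dx=\int\dv\bigl((\u\times\H)\times\H\bigr)\,dx=0,
\end{equation*}
using precisely the identity \eqref{nae} and the fact that we integrate over the whole space so the boundary term vanishes. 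This is the ``special structure of coupling'' referred to in the statement; it is the crucial cancellation that prevents the top-order magnetic terms in the $L^2$ estimate from losing a derivative.

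All remaining terms are either transport terms $\int a q(\u\cdot\nabla)q\,dx$, $\int r\u\cdot(\u\cdot\nabla)\u\,dx$, or time/convective derivatives of the smooth coefficients $a(S,\epsilon q)$ and $r(S,\epsilon q)$; each is uniformly bounded in $\epsilon$ by $C(\mathcal{M}(T))$ via H\"older's inequality and Sobolev embedding (recall $s=4>d/2+1$ so $\u,\nabla q,\nabla S$ are bounded in $L^\infty$). Putting these bounds together yields
\begin{equation*}
\frac{d\mathcal{E}}{dt}\leq C(\mathcal{M}(T)),
\end{equation*}
and integrating from $0$ to $t\in[0,T]$ together with the initial bound \eqref{nat} on $\mathcal{E}(0)$ (which gives $\mathcal{E}(0)\le C_0$) and the equivalence of $\mathcal{E}$ with $\|(q,\u,\H)\|^2$ yields \eqref{nbf}. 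The only real subtlety is the second cancellation above; once one recognizes that the unweighted $|\H|^2$ contribution pairs correctly with the $\u$-weighted momentum contribution via \eqref{nae}, the rest of the estimate is routine.
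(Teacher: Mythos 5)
Your proposal is correct and follows essentially the same route as the paper: the same weighted $L^2$ energy with multipliers $q$, $\u$, $\H$, cancellation of the $1/\epsilon$ terms by skew-symmetry, cancellation of the magnetic coupling via \eqref{nae}, and bounding of the remaining transport and coefficient-derivative terms by $C(\mathcal{M}(T))$ (the paper phrases this last step as a Gronwall argument plus $e^{Ct}\le 1+\tilde Ct$, which is equivalent to your direct bound on $d\mathcal{E}/dt$). One small caveat: you let the equivalence constants $c_1,c_2$ between $\mathcal{E}$ and $\|(q,\u,\H)\|^2$ depend on $\mathcal{M}(T)$, which would turn the constant term in \eqref{nbf} into $C(\mathcal{M}(T))$ rather than an absolute $C_0$ and would spoil the bootstrap in Theorem \ref{prop}; as in the paper, you should invoke the uniform upper and lower bounds on $a$ and $r$ (they are bounded away from zero and infinity independently of $\mathcal{M}(T)$) so that $c_1,c_2$ are absolute and $\mathcal{E}(0)$ is controlled by $M_0$ alone.
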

\begin{proof} Multiplying \eqref{nak} by $q$, \eqref{nal} by
$\u$, and \eqref{nam} by $\H$, respectively,
integrating over $\R^d$, and adding the resulting equations together, we obtain
\begin{align}
    &\langle a \partial_t q, q\rangle
+  \langle r \partial_t \u, \u\rangle
   +\langle  \partial_t \H, \H\rangle
   \nonumber\\
      &\qquad +\langle a (\u \cdot \nabla) q, q\rangle
   + \langle r (\u\cdot \nabla) \u,
   \u\rangle\nonumber\\
   & =\int[( \cu{\H}) \times
 {\H}]\cdot\u\,\textrm{d}x  +\int\cu(\u\times\H)\cdot
 \H\,\textrm{d}x.\label{first1}
  \end{align}
Here the singular terms involving $1/\epsilon$  are canceled. Using
the identity \eqref{nae} and integrating by parts, we immediately obtain
\begin{align*}
\int[( \cu{\H}) \times
 {\H}]\cdot\u\,\textrm{d}x  +\int\cu(\u\times\H)\cdot
 \H\,\textrm{d}x  =0.
\end{align*}

In view of the positivity and smoothness of $a(S,
\epsilon q)$ and $r(S, \epsilon q)$,
we get directly from \eqref{nak}, \eqref{nan} and the well-known nonlinear estimates (e.g., see \cite{Ho97}):
$$
\|f(u)\|_\sigma\leq C(\|u\|_\sigma)\|u\|_\sigma,\,\,\, u\in
H^{\sigma}(\R^d), \;\sigma>d/2,\; f(\cdot) \,\, \textrm{smooth  with}\,\,f(0)=0,
$$
 that
\begin{align}\label{nbba}
  \| \partial_t S\|_{3}\leq C(\mathcal{M}(T)), \quad
  \| \epsilon\partial_t q\|_{3}\leq C(\mathcal{M}(T)),
\end{align}
while by the Sobolev embedding theorem, we find that
\begin{align}\label{nbbb}
\|(\partial_{t}a, \partial_{t}r)\|_{L^\infty}\leq
 \|(\partial_{t}a,\partial_{t}r)\|_{2}\leq
 C(\mathcal{M}(T)).
\end{align}
By the definition of $\mathcal{M}(T)$ and the Sobolev embedding theorem,
it is easy to see that
\begin{align*}
\|(\nabla a, \nabla r)\|_{L^\infty}\leq C(\mathcal{M}(T)).
\end{align*}

Thus, from \eqref{first1} we get that
\begin{align}\label{nbg}
 \langle a   q, q\rangle
   +&  \langle r  \u, \u\rangle
    +\langle   \H, \H\rangle
\leq  \big\{ \langle a   q, q\rangle
   +  \langle r   \u, \u\rangle+\langle
   \H, \H\rangle\big\}\big|_{t=0}      \nonumber\\
  &  + C(\mathcal{M}(T))\int^t_0  \big\{|q(\tau)|^2   +|\u(\tau)|^2
 +|\H^\epsilon(\tau)|^2\big\}\textrm{d}\tau.
   \end{align}
Moreover, we have
\begin{align*}\nonumber% \label{aaaaa}
\|q\|^2+\|\u\|^2&\leq
\|(a)^{-1}\|_{L^\infty}\langle a q,
q\rangle
   + \|(r)^{-1}\|_{L^\infty}\langle r  \u,
   \u\rangle\nonumber\\
&\leq C_0(\langle a q,
q\rangle+\langle r\u, \u\rangle),
\end{align*}
since $a$ and $r$ are uniformly bounded away from
zero. Applying Gronwall's Lemma to \eqref{nbg}, we conclude that
\begin{align*}
   \|(q, \u,  \H)(t)\|^2\le C_0 \|(q_0, \u_0,
   \H_0)\|^2\exp\{tC(\mathcal{M}(T))\}.
\end{align*}
Therefore, the estimate \eqref{nbf} follows from an elementary
inequality
\begin{align}\label{ele}
e^{Ct}\leq 1+\tilde{C}t, \quad 0 \leq t \leq T_0,
\end{align}
where $T_0$ is some fixed constant.
\end{proof}

To derive the desired higher order estimates, we shall adapt and modify the
techniques developed in \cite{MS01}. Set
\begin{gather*}
  E(S,\epsilon q)=\left(\begin{array}{cc}
                   a(S,\epsilon q) & 0 \\
                    0 & r(S,\epsilon q)\mathbf{I}_{d}
                   \end{array}\right),\quad
   L(\partial_x)=\left(\begin{array}{cc}
                    0 & \dv  \\
                    \nabla & 0 \end{array}\right), \quad
    \U=\left(\begin{array}{c}
                   q \\
                   \u
                    \end{array}
                    \right),
                    \end{gather*}
where $\mathbf{I}_{d}$ denotes the  $d\times d$ unit matrix.

 Let $L_{E}(\partial_x )=  \{E(S,\epsilon q)\} ^{-1}L(\partial_x)$ and
$r_0(S) = r(S,0)$. Note that
$r_0(S)$ is smooth, positive, and bounded away from zero
with respect to each $\epsilon$. First, using Lemma \ref{NLa} and employing the same
analysis as in \cite{MS01}, we have

\begin{lem}\label{NLi}
  There exist constants $C_1>0$, $K>0$, and an increasing function $C(\cdot)$, depending
only on $M_0$, such that for all $\sigma\in\{1,2,3, 4\}$,
$\epsilon \in (0,1]$ and $t\in [0,T]$,
\begin{align}\label{nbh}
  \|\U\|_\sigma \leq K\|L(\partial_x) {\U}\|_{\sigma -1}
+\tilde{ C}\big(\|\cu
(r_0\u)\|_{\sigma-1}+\|\U\|_{\sigma-1}\big)
\end{align}
and
\begin{align}\label{nbha}
\|\U\|_\sigma \leq \tilde{
C}\big\{\|\{L_{E}(\partial_x)\}^\sigma  {\U}\|_{0}
+\|\cu(r_0\u)\|_{\sigma-1}+\|\U\|_{\sigma-1}\big\},
\end{align}
where $\tilde{C}:=C_1+tC(\mathcal{M}(T))+\epsilon C(\mathcal{M}(T)) $.
\end{lem}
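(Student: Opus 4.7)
The plan is to first establish \eqref{nbh} by a standard Hodge-type elliptic estimate applied componentwise to $\U=(q,\u)^T$, and then to deduce \eqref{nbha} by iterating \eqref{nbh} in conjunction with the algebraic identity $L(\partial_x)=E(S,\epsilon q)\,L_E(\partial_x)$.

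For \eqref{nbh} the scalar component is immediate: $\|q\|_\sigma\leq C(\|q\|_{\sigma-1}+\|\nabla q\|_{\sigma-1})$, and $\nabla q$ is precisely the second block of $L(\partial_x)\U$. For the vector component I invoke the standard Hodge-type estimate $\|\u\|_\sigma\leq C\big(\|\dv\u\|_{\sigma-1}+\|\cu\u\|_{\sigma-1}+\|\u\|_{\sigma-1}\big)$, whose $\dv\u$ term is the first block of $L(\partial_x)\U$. To trade $\cu\u$ for $\cu(r_0\u)$ I use the commutator identity $r_0\,\cu\u=\cu(r_0\u)-\nabla r_0\times\u$ together with the uniform positivity of $r_0$ and a Moser product estimate, yielding $\|\cu\u\|_{\sigma-1}\leq \tilde{C}\big(\|\cu(r_0\u)\|_{\sigma-1}+\|\u\|_{\sigma-1}\big)$. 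The Moser constant depends on $\|r_0\|_\sigma$, hence on $\|S\|_\sigma$; and Lemma \ref{NLa} gives $\|S\|_4^2\leq C_0+tC(\mathcal{M}(T))$, which supplies the $C_1+tC(\mathcal{M}(T))$ contribution to $\tilde{C}$.

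For \eqref{nbha} I iterate \eqref{nbh}. The factorisation $L(\partial_x)=E\,L_E(\partial_x)$ together with Moser gives $\|L(\partial_x)\U\|_{\sigma-1}\leq \tilde{C}\,\|L_E(\partial_x)\U\|_{\sigma-1}$, and the smooth-in-$\epsilon$ difference $E(S,\epsilon q)-\mathrm{diag}(a(S,0),r_0(S)\mathbf{I}_d)=O(\epsilon)$ accounts for the $\epsilon C(\mathcal{M}(T))$ summand of $\tilde{C}$. I then reapply \eqref{nbh} to the $\U$-type field $\U_1:=L_E(\partial_x)\U=\bigl(a^{-1}\dv\u,\;r^{-1}\nabla q\bigr)^T$. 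The crucial observation is that the vector part of $\U_1$ is a scalar multiple of a gradient, so
\[
\cu\bigl(r_0\,r^{-1}\nabla q\bigr)=\nabla(r_0 r^{-1})\times\nabla q
\]
is a pure Moser commutator bounded by $\tilde{C}\,\|\U\|_{\sigma-1}$. Thus the ``new'' $\cu(r_0\u_1)$ term produced at the second iteration is swallowed by the lower-order remainder and generates no fresh principal contribution. Repeating the step $\sigma$ times (with $\sigma\leq 4$ fixed), one arrives at \eqref{nbha}.

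The main obstacle is not any single algebraic manipulation but the careful bookkeeping of constants across the iteration: I must verify that the Moser constants at the $\sigma$ levels accumulate \emph{additively} into a single $\tilde{C}$ rather than multiplicatively, that every curl-type contribution produced at levels $k\geq 2$ is a genuine commutator between $\nabla$ and the variable coefficients (controllable by $\tilde{C}\|\U\|_{\sigma-1}$) rather than a new independent remainder, and that the entire $\epsilon$-dependence can be traced back to the smooth differences $r(S,\epsilon q)-r_0(S)$ and $a(S,\epsilon q)-a(S,0)$ and absorbed into the single $\epsilon C(\mathcal{M}(T))$ summand of $\tilde{C}$.
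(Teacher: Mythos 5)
Your proof is correct and follows essentially the same route as the paper, which simply defers to the analysis of M\'etivier--Schochet \cite{MS01}: a div--curl (Hodge) estimate for \eqref{nbh} with the commutator $r_0\cu\u=\cu(r_0\u)-\nabla r_0\times\u$ and Lemma \ref{NLa} controlling the coefficient norms, followed by iteration of \eqref{nbh} through the factorisation $L=E\,L_E$ for \eqref{nbha}, where the key point is indeed that $\cu(r_0 r^{-1}\nabla q)$ is a lower-order commutator. The only bookkeeping remark worth recording is that the constants accumulate multiplicatively, not additively, across the $\sigma\le 4$ iterations, but since $t\le T$ and $\epsilon\le 1$ any power of $C_1+tC(\mathcal{M})+\epsilon C(\mathcal{M})$ can be rewritten in the same form, so this causes no difficulty.
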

We remark that the inequalities \eqref{nbh} and \eqref{nbha} are similar to the well known Helmholtz decomposition,  and the estimate on $\|S(t)\|^2_{4}$ in Lemma
\ref{NLa} plays a key role in the proof of Lemma \ref{NLi}.

Our next task is to bound $\|\{L_{E}(\partial_x)\}^\sigma  {\U}\|_{0}$ and
$\|\cu(r_0\u)\|_{\sigma-1}$ by induction arguments. We first show the following estimate.

\begin{lem}\label{NLee}
  There exists a  constant
  $C_0>0$ and an increasing function $C(\cdot)$
from $[0,\infty)$ to $[0,\infty)$, independent of $\epsilon$,  such
that for all $0\leq \sigma \leq 3$,   all
$\epsilon \in (0,1]$ and $t\in [0,T]$, it holds that
\begin{align}
 \int E|\{L_{E}(\partial_x)\}^\sigma  {\U}(t)|^2\,\textrm{\emph{d}}x
\leq C_0
+ tC(\mathcal{M}(T)),
\label{nbeea}\end{align}
and for $\sigma=4$,
\begin{align}
& \int E|\{L_{E}(\partial_x)\}^\sigma  {\U}(t)|^2\,\textrm{\emph{d}}x \nonumber\\
\leq &C_0
+ tC(\mathcal{M}(T))+2\int^t_0\int (ar)^{-4}(\nabla(\Delta^2 \H)\H)\nabla\Delta\dv\u\,\textrm{\emph{d}}x  \textrm{\emph{d}}\tau.
\label{nbeea1}\end{align}
\end{lem}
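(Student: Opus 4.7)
The plan is to run a weighted $L^2$ energy identity at level $\sigma$ and exploit the skew-symmetry of $L(\partial_x)$ to annihilate the $1/\epsilon$ contribution. Writing $\U = (q,\u)^T$, the equations \eqref{nak}--\eqref{nal} take the compact form
\[
\partial_t \U + \frac{1}{\epsilon} L_{E}(\partial_x) \U = \mathcal R,\qquad \mathcal R := -(\u\cdot\nabla)\U + E^{-1} F,
\]
with $F = (0,(\cu\H)\times\H)^{T}$. For $\sigma = 0$ the bound is precisely Lemma \ref{NLb}; for $\sigma\ge 1$ I would apply $L_E^\sigma:=\{L_E(\partial_x)\}^\sigma$ to the above and test against $E L_E^\sigma \U$. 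Since $L(\partial_x) = E L_E(\partial_x)$ is skew-symmetric on $L^2(\R^d)$,
\[
\frac{1}{\epsilon}\int E L_{E}^{\sigma+1}\U \cdot L_{E}^\sigma \U\,dx = \frac{1}{\epsilon}\int L(\partial_x)(L_{E}^\sigma \U)\cdot L_{E}^\sigma \U\,dx = 0,
\]
which eliminates the singular term. Combining with the usual manipulation of $\partial_t(E\cdot)$ yields
\[
\frac{d}{dt}\int E|L_E^\sigma\U|^2\,dx = \int \partial_t E\,|L_E^\sigma\U|^2\,dx + 2\!\int\! E L_E^\sigma \mathcal R \cdot L_E^\sigma \U\,dx - 2\!\int\! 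E[L_E^\sigma,\partial_t]\U\cdot L_E^\sigma \U\,dx.
\]

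Integrating in time, the datum contributes at most $C_0$ by \eqref{nat}, $\partial_t E$ is bounded in $L^\infty$ via \eqref{nbba}--\eqref{nbbb}, and the commutator $[L_E^\sigma,\partial_t]$ is a differential operator of order $\sigma$ whose coefficients are polynomials in $\partial_t S$, $\epsilon\partial_t q$ and spatial derivatives of $S,\epsilon q$ — all bounded by $C(\mathcal M(T))$ by Lemma \ref{NLa} and \eqref{nbba}. For $\sigma\le 3$, every term coming from $L_E^\sigma\mathcal R$ carries at most three spatial derivatives on any one of $\U,\H$, so standard Moser/Kato-type product and composition estimates bound each integrand by $C(\mathcal M(T))$, giving \eqref{nbeea}.

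The case $\sigma=4$ requires the same scheme but one expansion term obstructs closure. Tracking the principal symbol of $L_E$, the $\u$-component of $L_E^{4}(0, r^{-1}(\cu\H)\times\H)$ has leading part proportional to $(ar)^{-2} r^{-1}\nabla\Delta\dv((\cu\H)\times\H)$; using $\dv\H=0$ and $\cu\cu\H=-\Delta\H$, the vector identity \eqref{naeo} produces the single fifth-derivative piece $-\H\cdot\nabla\Delta^{2}\H$ plus remainders with at most four derivatives on $\H$. Pairing against the $\u$-component of $EL_E^4\U$, whose leading part is $a^{-2}r^{-1}\nabla\Delta\dv\u$, delivers precisely
\[
2\int_0^t\!\!\int (ar)^{-4}\bigl(\nabla(\Delta^{2}\H)\,\H\bigr)\,\nabla\Delta\dv\u\,dx\,d\tau,
\]
which is left on the right of \eqref{nbeea1}. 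Every other contribution — coefficient commutators where a derivative falls on $a$ or $r$, the $|\cu\H|^2$ piece of $\dv((\cu\H)\times\H)$, four-derivative terms from the transport $(\u\cdot\nabla)\U$, and the time commutator $[L_E^4,\partial_t]\U$ — carries at most four derivatives on any single factor and is absorbed into $tC(\mathcal M(T))$, often using an $\epsilon$ coming from differentiating $E^{-1}(S,\epsilon q)$ in $\epsilon q$.

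The main obstacle is the bookkeeping in this last step: one must be certain that no other cross term between high derivatives of the variable coefficients $a,r$ and $\nabla\Delta^{2}\H$ escapes the four-derivative budget, and that the expansion of $L_E^4$ produces exactly one irreducible fifth-derivative magnetic term, with the coefficient $(ar)^{-4}$ stated in \eqref{nbeea1}. This isolated term is then handled (in the subsequent lemma, not in this one) by applying a matching top-order operator to \eqref{nam} and using $\dv\H=0$ to achieve the cancellation advertised in the introduction.
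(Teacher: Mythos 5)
Your proposal follows essentially the same route as the paper: an energy estimate on $\U_\sigma=\{L_E(\partial_x)\}^\sigma\U$ using the skew-adjointness of $L(\partial_x)$ to cancel the singular term, Moser-type bounds on the commutators for $\sigma\le 3$, and for $\sigma=4$ an explicit computation of the leading block $\mathrm{diag}\bigl((ar)^{-2}\Delta^2,(ar)^{-2}(\nabla\dv)^2\bigr)$ of $\{L_E\}^4$ together with $\dv\H=0$ and $\cu\cu\H=-\Delta\H$ to isolate the single fifth-order magnetic term $(ar)^{-4}(\nabla(\Delta^2\H)\H)\nabla\Delta\dv\u$ left on the right-hand side. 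This matches the paper's proof in both structure and the identification of the irreducible top-order term.
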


\begin{proof} Let $\U_\sigma:=\{L_{E}(\partial_x)\}^\sigma \U$,
$\sigma\in \{0,\dots, 4\}.$ For simplicity, we set $\mathcal{M}:=\mathcal{M} (T)$,
 and $E:= E(S,\epsilon q)$.
 The case $k =0$ is an immediate consequence of Lemma \ref{NLb}.
It is easy to verify that the operator $L_E(\partial_x )$ is
bounded from $H^{k}$ to $H^{k-1}$ for $k \in \{1,\dots, 5\}$. Note that
the equations \eqref{nak}, \eqref{nal} can be written as
\begin{align}\label{nbeb}
  (\partial_t+\u\cdot \nabla)\U+\frac{1}{\epsilon}E^{-1}L(\partial_x)\U=
  E^{-1}\mathbf{J}
\end{align}
with
\begin{gather*}
\mathbf{J}=\left(\begin{array}{c}
                 0 \\
                ( \cu{\H}) \times {\H}
                 \end{array}
                 \right).
\end{gather*}

 For $ \sigma \geq 1$, we commute the operator $\{L_E\}^\sigma$ with  \eqref{nbeb} and
 multiply the resulting system by $E$ to infer that
\begin{align}\label{nbeeb}
  E(\partial_t+\u\cdot \nabla)\U_\sigma
  +\frac{1}{\epsilon}L(\partial_x)\U_\sigma=E(\mathbf{f}_\sigma+
  \mathbf{g}_\sigma),
\end{align}
where
$$ \mathbf{f}_\sigma:= [\partial_t+\u\cdot \nabla , \{L_E\}^\sigma] \U,\quad
\mathbf{g}_\sigma:= \{L_E\}^\sigma (E^{-1} \mathbf{J}). $$

Multiplying \eqref{nbeeb} by $ \U_\sigma$ and integrating
over $(0,t)\times\R^d$ with $t\leq T$, noticing that
 the singular terms cancel out since $L(\partial_x)$ is skew-adjoint, we use
the inequalities \eqref{nbba} and \eqref{nbbb}, and Cauchy-Schwarz's
inequality to deduce that
\begin{align}\label{nbeec}
  \frac{1}{2}\langle E(t)\U_\sigma(t), \U_\sigma(t)\rangle
\leq & \frac{1}{2}\langle E(0)\U_\sigma(0),
\U_\sigma(0)\rangle
         + C(\mathcal{M})\int^t_0 \|\U_\sigma(\tau)\|^2 \textrm{d}\tau \nonumber\\
  & + \int^t_0 \|\mathbf{f}_\sigma(\tau)\|^2 \textrm{d}\tau+
\int^t_0\int E (\mathbf{g}_\sigma\U_\sigma)(\tau) \textrm{d}\tau.
\end{align}
For $\sigma\leq 3$, the estimate \eqref{nbeea} is obtained by
taking a similar analysis to that of Lemma 2.4 in \cite{MS01}.

Next we mainly deal with the case $\sigma=4$.
Following the proof process of Lemma 2.4 in \cite{MS01}, we obtain that
\begin{align}\label{nbeef}
 \|\mathbf{f}_4(t)\|\leq C(\mathcal{M}(t)).
\end{align}

Now we estimate the nonlinear term in \eqref{nbeec} involving
$\mathbf{g}_\sigma$. A straightforward computation implies that
\begin{align*} % \label{form}
  \U_k= \left\{
                   \begin{array}{ll}
                   \left(\begin{array}{c}
                   \{L_1L_2\}^{\frac{k-1}{2}} L_1  \u \\
                   \{L_2 L_1\}^{\frac{k-1}{2}} L_2  q
                    \end{array}
                    \right)  , &  \text{if}\  k  \ \text{is  odd}; \\[-1.0ex]\\
                    \left(\begin{array}{c}
                   \{L_1 L_2\}^{k/2} q \\
                   \{L_2 L_1\}^{k/2} \u
                    \end{array}
                    \right), & \hbox{\text{if}}\  k \  \text{is even},
                   \end{array}
                 \right.
  \end{align*}
where
\begin{align}
L_1:= a^{-1}\dv,\;\;\;\;
L_2:=r^{-1}\nabla. \nonumber
\end{align}
Therefore for $k=4$, we can replace the operator $\{L_{E}(\partial_x)\}^4$ by
\begin{align*}
\{L_{E}(\partial_x)\}^4& =\left[E^{-1}\left(\begin{array}{cc}
0&1\\
1&0
\end{array}
\right)
\left(\begin{array}{cc}
\dv&0\\
0&\nabla
\end{array}
\right)
  \right]^{4}\\
  & =
  \left(\begin{array}{cc}
(ar)^{-2}\Delta^2 &0\\
0 &(ar)^{-2}(\nabla\dv)^{2}
\end{array}
\right)
 +\mathfrak{L},
\end{align*}
where $\mathfrak{L}$ is a matrix whose elements are the
summation of the lower order differential operators up to order $3$.
Thus by virtue of Cauchy-Schwarz's and Sobolev's inequalities, and \eqref{nbbb},
we infer that
\begin{align*}
\int^t_0\int E\mathbf{g}_4\U_4\textrm{d}x\textrm{d}\tau
=&\int^t_0\int E(L_{E})^4(E^{-1}\mathbf{J}) \U_4\textrm{d}x\textrm{d}\tau\\
\leq &\int^t_0\int (ar)^{-4}[(\nabla\dv)^2(\cu{\H} \times {\H})]
(\nabla\dv)^{2}\u\,\textrm{d}x  \textrm{d}\tau+C(\mathcal{M})\\
=: & I_1(t)+C(\mathcal{M}).
\end{align*}
%%%
To control $I_1(t)$, we recall the basic vector identities:
$$ \dv (\mathbf{a}\times \mathbf{b}) =\mathbf{b}\cdot \cu \mathbf{a} -\mathbf{a}\cdot \cu \mathbf{b},
\quad \cu \cu \mathbf{a}=\nabla \,\dv \,\mathbf{a} -\Delta \mathbf{a}, $$
 and the fact that $\dv \H=0$ to deduce
  \begin{align*}
I_1(t) &\leq \int^t_0\int (ar)^{-4}(\nabla(\Delta^2 \H)\H)\nabla\Delta\dv\u\,\textrm{d}x
\textrm{d}\tau+C(\mathcal{M}). \end{align*}
Thus, we have
\begin{align}
&\int^t_0\int E\mathbf{g}_4\U_4\, \textrm{d}x\textrm{d}\tau
\leq  \int^t_0\int (ar)^{-4}(\nabla(\Delta^2 \H)\H)\nabla\Delta\dv\u\,
\textrm{d}x \textrm{d}\tau+C(\mathcal{M})\label{new1}.
\end{align}

Finally, \eqref{nbeea1} follows from the above estimates \eqref{nbeec}--\eqref{new1} and the
positivity of $E$.
\end{proof}

Next, we derive an estimate for
$\|\cu(r_0\u)\|_{\sigma-1}$. Define
\begin{align}\label{factor}
 f(S,\epsilon q): =1-\frac{r_0(S)}{r(S, \epsilon q)}.
\end{align}
Hereafter we denote $r_0(t) := r_0(S(t))$ and
$f(t):= f(S(t), \epsilon q(t))$
for notational simplicity.

One can factor out $\epsilon q$ in $f(t)$. In fact,
using Taylor's expansion, one obtains that there exists a smooth
function $g(t)$, such that
\begin{align}\label{nbfe}
f(t)=\epsilon g(t):= \epsilon g(S(t), \epsilon q(t)), \quad
\|g(t)\|_4\leq  C(\mathcal{M}(T)).
\end{align}

Since
$$\partial_t S + (\u\cdot\nabla)S = 0, $$
 the momentum equations  \eqref{nal} are equivalent to
\begin{align}\label{nbff}
[\partial_t + (\u\cdot \nabla)] (r_0\u)
+\frac{1}{\epsilon}\nabla q  =\, g\nabla q
+ (1-\epsilon g)( \cu{\H}) \times {\H}.
\end{align}
%%%%%
We perform the operator \emph{curl} to the system \eqref{nbff} to obtain that
\begin{align}
& [\partial_t +(\u\cdot \nabla)](\cu(r_0\u))\nonumber\\
 = &\, [\u\cdot \nabla, \cu](r_0\u) + [\cu,g]\nabla q +
 \cu [(1-\epsilon g)( \cu{\H}) \times {\H}],  \label{nbfg}
 \end{align}
 and we can show
%%%%%%%%%%%%%%%%%%%%%%%%%%%%%%%%%%%%%%%%%
\begin{lem}\label{NLg}
  There exist constants $C_0>0$, $0<\epsilon_0<1$  and an increasing function $C(\cdot)$
from $[0,\infty)$ to $[0,\infty)$, such that for all $\epsilon \in (0,\epsilon_0]$ and $t\in [0,T]$, it holds that
\begin{align}  \label{nbga1}
\|\{\cu (r_0\u)\|^2_{2} \leq  C_0 + tC(\mathcal{M}(T)),
\end{align}
and for $|\alpha|=3$,
\begin{align}  \label{nbga2}
& \sum_{|\alpha|=3} \langle \partial^\alpha \cu (r_0\u)(t), F \partial^\alpha \cu (r_0\u)(t)\rangle
\leq C_0 + tC(\mathcal{M}(T))\nonumber\\
&\qquad \qquad  +2 \sum_{|\alpha|=3} \int^t_0\int (ar)^{-4}(\H\cdot\nabla)\partial^\alpha
\H\partial^\alpha(\Delta\u)\,\textrm{\emph{d}}x\textrm{\emph{d}}\tau,
\end{align}
where we denote by $F:=(ar)^{-4}(r_0(1-\epsilon g))^{-1}>0$.
\end{lem}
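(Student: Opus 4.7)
The plan is to differentiate the evolution equation \eqref{nbfg} for $\cu(r_0\u)$ by $\partial^\alpha$ with $|\alpha|\le 3$ and then carry out weighted $L^2$ energy estimates. The weight $F=(ar)^{-4}(r_0(1-\epsilon g))^{-1}>0$ is chosen precisely so that the irreducible top-order residual in the resulting identity will match the magnetic contribution already retained on the right of \eqref{nbeea1}, paving the way for a later cancellation through the $\partial^\alpha$-derivative of \eqref{nam}.

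For $|\alpha|\le 2$ I would apply $\partial^\alpha$ to \eqref{nbfg} and pair with $\partial^\alpha\cu(r_0\u)$. The commutator $[\u\cdot\nabla,\cu](r_0\u)$, the source $[\cu,g]\nabla q=\nabla g\times\nabla q$, and the Lorentz source $\cu[(1-\epsilon g)(\cu\H)\times\H]$ (which contains at most two derivatives of $\H$) all involve no more than four derivatives of the unknowns, and hence are controlled by $C(\mathcal{M}(T))$ via Moser-type product estimates, Lemma \ref{NLa}, the bounds \eqref{nbba}--\eqref{nbbb}, and the factorization \eqref{nbfe}. Integrating in time and invoking \eqref{ele} will yield \eqref{nbga1}.

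For the delicate case $|\alpha|=3$ I would pair $\partial^\alpha$ applied to \eqref{nbfg} with $F\partial^\alpha\cu(r_0\u)$. The transport part generates $\tfrac12\tfrac{d}{dt}\int F|\partial^\alpha\cu(r_0\u)|^2\,dx$ together with a weight-transport error $\tfrac12\int(\partial_tF+\dv(F\u))|\partial^\alpha\cu(r_0\u)|^2\,dx$; this error, the commutator contributions, and the $g\nabla q$ source are all bounded by $C(\mathcal{M}(T))$ using \eqref{nbba}--\eqref{nbbb} and the fact that commutators drop one derivative. The only problematic term is the Lorentz source $\int F\partial^\alpha\cu(r_0\u)\cdot\partial^\alpha\cu[(1-\epsilon g)(\cu\H)\times\H]\,dx$, which a priori carries fifth-order derivatives of $\H$.

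To extract its irreducible part I would invoke the identity $(\cu\H)\times\H=(\H\cdot\nabla)\H-\nabla(|\H|^2/2)$, integrate by parts exploiting the self-adjointness of $\cu$ together with $\cu\cu=\nabla\dv-\Delta$, and use $\dv\H=0$ to discard divergence-type boundary corrections. The gradient piece $\nabla(|\H|^2/2)$ is controlled after one further integration by parts using the smallness relation $\dv\u=O(\epsilon)$ coming from \eqref{nak}. The $\nabla\dv$ contribution produced by $\cu\cu$ pairs with the magnetic-curl term already retained in \eqref{nbeea1}. What remains is the $-\Delta$ contribution which, after the factors $r_0(1-\epsilon g)$ absorb the $F$-weight, leaves precisely $2\sum_{|\alpha|=3}\int_0^t\!\int(ar)^{-4}(\H\cdot\nabla)\partial^\alpha\H\cdot\partial^\alpha(\Delta\u)\,dx\,d\tau$ as the unbounded residual appearing in \eqref{nbga2}. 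The hardest part of the argument will be the bookkeeping here, tracking signs, the factor of $2$, and the precise matching with \eqref{nbeea1}, so that the two irreducible residuals can be cancelled by the subsequent energy estimate on $\partial^\alpha\H$.
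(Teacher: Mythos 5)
Your overall route coincides with the paper's: apply $\partial^\alpha$ to \eqref{nbfg}, test against $F\partial^\alpha\cu(r_0\u)$, absorb the transport, commutator and $g\nabla q$ contributions into $C(\mathcal{M}(T))$, and reduce the Lorentz source via $(\cu\H)\times\H=(\H\cdot\nabla)\H-\nabla(|\H|^2/2)$ together with $\cu\cu=\nabla\dv-\Delta$. Two of your steps, however, are misidentified, and the second one means the argument as written does not deliver \eqref{nbga2} in the stated form. First, the gradient piece: you propose to handle $\nabla(|\H|^2/2)$ by invoking ``$\dv\u=O(\epsilon)$''. That relation is not available: from \eqref{nak}, $\dv\u=-\epsilon\,a(\partial_tq+\u\cdot\nabla q)$, and only $\epsilon\partial_tq$ is uniformly bounded (see \eqref{nbba}), so $\dv\u$ is $O(1)$, not $O(\epsilon)$. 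The correct mechanism is simpler: since $\cu\nabla=0$, one has $\cu\big[(1-\epsilon g)\nabla(|\H|^2/2)\big]=-\epsilon\nabla g\times\nabla(|\H|^2/2)$, which carries only one derivative of $\H$, so after $\partial^\alpha$ it is bounded in $L^2$ by $C(\mathcal{M}(T))$ with no integration by parts needed.

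Second, and more seriously, the $\nabla\dv$ contribution produced by $\cu\cu=\nabla\dv-\Delta$ does \emph{not} ``pair with the magnetic-curl term already retained in \eqref{nbeea1}'': that residual (the one containing $\nabla(\Delta^2\H)\H\,\nabla\Delta\dv\u$) is cancelled by the \emph{second} residual in \eqref{nbgaz} of Lemma \ref{lem7}, which originates from the $\H\dv\u$ part of the induction equation \eqref{nam}, not from the vorticity estimate. If you carry the $\nabla\dv$ piece forward as an extra irreducible residual, you prove a weaker inequality than \eqref{nbga2}, whose right-hand side contains only the $\Delta\u$ term. In fact the $\nabla\dv$ piece must be, and can be, absorbed into $C(\mathcal{M}(T))$ inside the present lemma: integrate the gradient in $\partial^\alpha\nabla\dv\u$ by parts onto $(ar)^{-4}\partial^\alpha[(\H\cdot\nabla)\H]$ and observe that $\dv[(\H\cdot\nabla)\H]$ is a quadratic expression in $\nabla\H$ precisely because $\dv\H=0$; this drops the top derivative count on $\H$ to four, so the pairing with $\partial^\alpha\dv\u$ is bounded by $C(\mathcal{M}(T))$. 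A similar extra integration by parts is also needed when you replace $\partial^\alpha[(\H\cdot\nabla)\H]$ by $(\H\cdot\nabla)\partial^\alpha\H$ against $\partial^\alpha\Delta\u$, since the latter carries five derivatives of $\u$ and the commutator cannot be estimated in $L^2$ against it directly.
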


\begin{proof}
The estimate \eqref{nbga1} directly follows from an energy estimate performed on \eqref{nbfg}, and hence
we omit the details here. Here we give a proof of \eqref{nbga2} only.

Set  $\mathcal{M}:=\mathcal{M}(T)$,
and  $\omega= \cu (r_0\u)$.  By the defination of $f(S,\epsilon q)$ and \eqref{nbfe},
there exists an $\epsilon_0\in (0,1]$, such that
$1-\epsilon g$ is bounded from below for $\epsilon \in (0,\epsilon_0]$. Taking $\partial^\alpha_x$
$(|\alpha|= 3)$ to \eqref{nbfg}, multiplying
the resulting equations by $F\partial^\alpha_x\omega$, and
integrating over $(0,t)\times \R^d$ with $t\leq T$, we obtain that
\begin{align}\label{nbgb}
\frac{1}{2}\langle \partial^\alpha \omega(t), F \partial^\alpha \omega(t)\rangle
\leq\,&\frac{1}{2}\langle \partial^\alpha \omega(0), F \partial^\alpha \omega(0)\rangle
+C(\mathcal{M})\int_0^t\|\partial^\alpha \omega(\tau)\|^2\textrm{d}\tau\nonumber \\
&-\int^t_0\langle(\u^\epsilon\cdot \nabla)
\partial^\alpha  \omega, F\partial^\alpha  \omega\rangle(\tau)\textrm{d}\tau\nonumber\\
&+\int^t_0\langle [\u\cdot \nabla, \partial_x^\alpha]\omega, F\partial^\alpha
 \omega\rangle(\tau) \textrm{d}\tau\nonumber\\
& +\int^t_0\langle \partial^\alpha \{ [\cu, g^\epsilon]\nabla q\},
F\partial^\alpha  \omega\rangle(\tau) \textrm{d}\tau\nonumber\\
& +\int^t_0 \langle \partial^\alpha \{[\u^\epsilon\cdot \nabla, \cu](r_0\u)\},
  F\partial^\alpha  \omega\rangle(\tau) \textrm{d}\tau\nonumber\\
&+\int^t_0 \langle \partial^\alpha \{\cu [(1-\epsilon g)( \cu{\H})
\times {\H}]\}, F\partial^\alpha  \omega\rangle(\tau) \textrm{d}\tau \nonumber\\
=&:\frac{1}{2}\langle \partial^\alpha \omega(0), F \partial^\alpha \omega(0)\rangle+ \int^t_0\sum_{i=1}^6 I_i(\tau).
 \end{align}
We have to estimate the terms $I_i(\tau)$ ($2\leq i\leq 6$) on the right-hand side of (\ref{nbgb}).
Applying  integrations by parts, we have
\begin{align}
I_2(\tau)&\leq \int_{\mathbb{R}^d}  F|   \partial^\alpha  \omega  |^2 \dv\u\, \textrm{d}x+C(\mathcal{M})\nonumber\\
  & \leq \|\dv \u(\tau)\|_{L^\infty}\|\partial^\alpha   \omega(\tau)\|^2+C(\mathcal{M})
  \leq C(\mathcal{M}),
    \label{nbgc}
\end{align}
while for the term $I_3(\tau)$, an application of Cauchy-Schwarz's inequality gives
\begin{align*}
 | I_3(\tau)|\leq   C\|F\partial^\alpha  \omega\| \, \|\mathbf{h}_\alpha(\tau)\|,
 \quad  \mathbf{h}_\alpha(\tau):= [\u\cdot \nabla, \partial_x^\alpha]\omega.
\end{align*}
The commutator $\mathbf{h}_\alpha$ is a sum of terms
$\partial^\beta_x \u\partial^\gamma_x \omega$
with multi-indices $\beta$ and $\gamma$ satisfying $|\beta|+|\gamma|\leq 4$, $|\beta|>0$,
and $|\gamma|>0$. Thus,
$$\|\mathbf{h}_\alpha(\tau)\|\leq C(\mathcal{M}),$$
 where the the following nonlinear Sobolev inequality has been used (see \cite{Ho97}): For all
$\alpha=(\alpha_1, \cdots, \alpha_d)$, $\sigma\geq 0$, and $f,g\in H^{k+\sigma}(\R^d)$, $|\alpha|=k$,
it holds that
$$ \|[f,\partial^\alpha ]g\|_{{\sigma}}\leq  C_0(\|f\|_{W^{1,\infty}}\| g\|_{{\sigma+k-1}}
    +\| f\|_{{\sigma+k}}\|g\|_{L^{\infty}}).
$$
Hence, we have
 \begin{align}\label{nbgd}
 | I_3(\tau)|\leq  C(\mathcal{M})+\|\partial^\alpha  \omega(\tau)\|^2.
\end{align}

Noting that $([\cu, g]\,\mathbf{a}\,)_{i,j} =
a_i\partial_{x_j} g - a_j\partial_{x_i}g$ for $\mathbf{a}=(a_1,\cdots, a_d)$,
using the  estimate \eqref{nbfe}, and the inequality
$\|uv\|_{\sigma-k-l}\leq K\|u\|_{\sigma-k}\|v\|_{\sigma-l}$ for $ k\geq 0$, $l\geq 0$,
$k+l\leq\sigma$ and $\sigma > d/2$ (see \cite{Ho97}),
we can control the term $I_4(\tau)$ as follows.
 \begin{align}\label{nbge}
    |I_4(\tau)|& \leq \|\partial^\alpha \{ [\cu, g]\nabla q\}\|\; \|
    F\partial^\alpha  \omega\|\nonumber\\
     & \leq K \| [\cu, g]\nabla q\|_{3}\, \| F\partial^\alpha  \omega\|\nonumber\\
    & \leq K \|\nabla g(\tau)\|_{3} \|\nabla q(\tau)\|_{3}\, \|
    F\partial^\alpha  \omega\|\nonumber\\
    &\leq  C(\mathcal{M})+\|\partial^\alpha  \omega(\tau)\|^2.
 \end{align}

Similarly, the term $I_5(\tau)$ can be bounded as follows.
 \begin{align}\label{nbgf}
    |I_5(\tau)|& \leq K\|\partial^\alpha \{[\u\cdot \nabla, \cu](r_0\u)\}\|
    \; \| F\partial^\alpha  \omega\|\nonumber\\
& \leq K\|[\u\cdot \nabla, \cu](r_0\u^\epsilon)\|_{3}\, \| F\partial^\alpha  \omega\|\nonumber\\
      & \leq K\|[\u_j, \cu]\partial_{x_j}(r_0\u)\|_{3}\,
      \| F\partial^\alpha  \omega\|\nonumber\\
        &\leq  C(\mathcal{M})+\|\partial^\alpha  \omega(\tau)\|^2.
 \end{align}

For the term $I_6(\tau)$, by virtue of  the formula
$\cu \cu \mathbf{a} =\nabla \,\dv \,\mathbf{a} -\Delta \mathbf{a}$, we
 integrate by parts to see that
\begin{align*}
   I_6(\tau)  & = \Big\langle \frac{(ar)^{-4}}{r_0(1-\epsilon g)}\partial^\alpha \cu\Big\{(1-\epsilon g)
((\H\cdot\nabla)\H-\nabla(\frac{|\H|^2}{2})\Big\},\partial^\alpha\cu( r_0\u)\Big\rangle
\nonumber\\
&\leq - \Big\langle \frac{(ar)^{-4}}{r_0(1-\epsilon g)}\partial^\alpha\{(1-\epsilon g)
\H\cdot\nabla)\H\},\partial^\alpha\cu\cu( r_0\u)\Big\rangle+C(\mathcal{M})\\
&\leq -\langle(ar)^{-4}\partial^\alpha[(\H\cdot\nabla)]\H,\partial^\alpha(\nabla\dv\u-\Delta\u)\rangle+C(\mathcal{M})\\
\red &\leq \langle(ar)^{-4}(\H\cdot\nabla)\partial^\alpha\H,\partial^\alpha\Delta\u\rangle+C(\mathcal{M}),
\end{align*}
  where we have used the fact that $\dv\H=0$.

Inserting the estimates for $I_j(\tau)$ ($2\leq j\leq 6$) into \eqref{nbgb}
and summing over $|\alpha|=3$, we obtain \eqref{nbga2}. The lemma is proved.
\end{proof}

We proceed to estimating the magnetic field.
\begin{lem}\label{lem7}
  There exist a constant $C_0>0$ and an increasing function $C(\cdot)$
from $[0,\infty)$ to $[0,\infty)$, such that
for $|\alpha|\leq 2$, all $\epsilon \in (0,1]$ and $t\in [0,T]$, we have
\begin{align}
 \sum_{|\alpha|\leq 2}\int (ar)^{-4}| \partial^\alpha\cu\H|^2\,\textrm{\emph{d}}x
\leq C_0+tC(\mathcal{M}(T))\label{al1}
\end{align}
and for $|\alpha|=3$,
\begin{align}
& \sum_{|\alpha|=3} \int (ar)^{-4}| \partial^\alpha\cu\H|^2\,\textrm{\emph{d}}x   \nonumber\\
\leq & C_0+tC(\mathcal{M}(T))
 -2\sum_{|\alpha|=3}\int_0^t\int (ar)^{-4}(\H\cdot\nabla)\partial^{\alpha}\H\partial^\alpha (\Delta\u)\,\textrm{\emph{d}}x\textrm{\emph{d}}\tau\nonumber\\
  & -2\int_0^t\int (ar)^{-4}(\nabla\Delta^2 \H\H)\nabla\Delta\dv\u\,\textrm{\emph{d}}x\textrm{\emph{d}}\tau. \label{nbgaz}
\end{align}
\end{lem}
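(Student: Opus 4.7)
The plan is to apply an $L^2$-based energy estimate to $\partial^\alpha\cu\H$, weighted by $(ar)^{-4}$. Starting from \eqref{nam}, applying $\cu$ and using $\cu\cu=\nabla\dv-\Delta$ together with $\dv\H=0$, one first obtains
\begin{equation*}
\partial_t\cu\H=\nabla(\H\cdot\cu\u-\u\cdot\cu\H)-\Delta(\u\times\H).
\end{equation*}
For each multi-index $\alpha$ with $|\alpha|\leq 3$, I would take $\partial^\alpha$, pair with $(ar)^{-4}\partial^\alpha\cu\H$, and integrate over $(0,t)\times\R^d$. The time-derivative contribution together with the piece from $\partial_t(ar)^{-4}$ (controlled via \eqref{nbbb}) yields $\tfrac{1}{2}\bigl[\int(ar)^{-4}|\partial^\alpha\cu\H|^2\,dx\bigr]_0^t$ modulo a remainder absorbed into $tC(\mathcal{M}(T))$.

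For $|\alpha|\leq 2$, every remaining spatial term involves at most four derivatives of $\H$ or $\u$ and is bounded by $C(\mathcal{M}(T))$ via Leibniz, the nonlinear Sobolev estimate invoked in the proof of Lemma \ref{NLg}, and the embedding $H^4\hookrightarrow L^\infty$. A useful simplification is that the term $\partial^\alpha\nabla\dv(\u\times\H)\cdot\partial^\alpha\cu\H$, after integrating by parts on $\nabla$, produces $\dv(\partial^\alpha\cu\H)\equiv 0$, leaving only a lower-order commutator in $\nabla(ar)^{-4}$. Gronwall together with \eqref{ele} then furnishes \eqref{al1}.

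For $|\alpha|=3$, the obstruction is that the contribution $-\partial^\alpha\Delta(\u\times\H)$ contains the top-order pieces $-\partial^\alpha\Delta\u\times\H$ (order five in $\u$) and $-\u\times\partial^\alpha\Delta\H$ (order five in $\H$), both of which exceed the $H^4$ regularity we have. Using the triple-product identity $(A\times B)\cdot C=A\cdot(B\times C)$ together with the pointwise formula
\begin{equation*}
(\H\times\partial^\alpha\cu\H)_i=\H\cdot\partial_i\partial^\alpha\H-(\H\cdot\nabla)\partial^\alpha H_i,
\end{equation*}
the top-$\u$ integral $-\int(ar)^{-4}(\partial^\alpha\Delta\u\times\H)\cdot\partial^\alpha\cu\H\,dx$ splits into the target integral $\int(ar)^{-4}(\H\cdot\nabla)\partial^\alpha\H\cdot\partial^\alpha\Delta\u\,dx$ and a complementary piece. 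That piece, together with the top-$\H$ contribution, is then processed by successive integrations by parts that invoke $\dv\H=0$ and $\dv\cu=0$ to annihilate would-be fifth- and sixth-order boundary terms; after summing over $|\alpha|=3$, the single irreducible leftover reassembles into $\int(ar)^{-4}(\nabla\Delta^2\H\cdot\H)\nabla\Delta\dv\u\,dx$, while every remaining contribution is bounded by $C(\mathcal{M}(T))$.

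The main obstacle is the bookkeeping in the $|\alpha|=3$ step: each derivative transfer must be tracked precisely so that (i) only the two master integrals advertised in \eqref{nbgaz} remain as irreducible leftovers, and (ii) their numerical coefficients and signs match exactly those appearing in \eqref{nbeea1} and \eqref{nbga2}, so that summing the three inequalities in the induction produces cancellation of the uncontrolled terms and closes the estimate for Theorem \ref{prop}. The divergence-free conditions $\dv\H=0$ and $\dv\cu=0$ are invoked at several stages precisely to ensure that all fifth- (and sixth-) order commutators other than the two master ones vanish.
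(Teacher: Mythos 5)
Your overall strategy coincides with the paper's: a weighted $L^2$ energy estimate on $\partial^\alpha\cu\H$, with the two uncontrollable top-order terms not estimated but isolated as the ``master'' integrals of \eqref{nbgaz}, to be cancelled later against \eqref{nbeea1} and \eqref{nbga2}. Your algebraic starting point is different --- you apply $\cu\cu=\nabla\dv-\Delta$ directly to $\cu(\u\times\H)$, whereas the paper first expands $\cu(\u\times\H)$ via \eqref{naff} into transport, stretching and compression terms and only then takes the curl --- and your treatment of the $|\alpha|\le 2$ case and of the $\nabla\dv(\u\times\H)$ term (via $\dv\cu=0$) is sound.

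The gap is in the $|\alpha|=3$ step, and it is not mere bookkeeping: the assertion that the irreducible leftovers are \emph{exactly} the two integrals of \eqref{nbgaz} with coefficient $-2$ \emph{is} the content of the lemma, since these must cancel the $+2\int\dots$ terms in \eqref{nbga2} and \eqref{nbeea1}; deferring the sign check defers the proof. Moreover, the one explicit identity you do give already fixes the sign, and it comes out opposite to \eqref{nbgaz}: the energy identity contributes $2\int_0^t\int(ar)^{-4}\partial^\alpha\partial_t\cu\H\cdot\partial^\alpha\cu\H$, so your top-$\u$ term is
$-2\int(ar)^{-4}(\partial^\alpha\Delta\u\times\H)\cdot\partial^\alpha\cu\H
=-2\int(ar)^{-4}\bigl[\H\cdot\partial_i\partial^\alpha\H-(\H\cdot\nabla)\partial^\alpha H_i\bigr]\partial^\alpha\Delta u_i$,
i.e.\ the first master integral enters with coefficient $+2$, not $-2$. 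The remaining pieces cannot repair this: the top-$\H$ term equals $2\int(ar)^{-4}(\u\times\cu W)\cdot W$ with $W=\partial^\alpha\cu\H$ (using $\Delta\partial^\alpha\H=-\cu\cu\partial^\alpha\H$), and is entirely controlled by the antisymmetry $\int(ar)^{-4}u_jW_i\partial_iW_j=-\int\partial_i[(ar)^{-4}u_j]W_iW_j$ (since $\dv W=0$), hence contributes no master integral; and your ``complementary piece'' $-2\int(ar)^{-4}\H\cdot\partial_i\partial^\alpha\H\,\partial^\alpha\Delta u_i$ feeds, after integrating by parts in $\partial_i$, only the second master integral (through $\partial^\alpha\Delta\dv\u$). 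So as written your route does not produce \eqref{nbgaz}; you would have to either locate a compensating $-4\int\dots$ (there is none in your decomposition) or re-derive Lemma \ref{NLg} in a convention matching yours. A secondary issue: the claimed reassembly of $\sum_{|\alpha|=3}\partial^\alpha(\cdot)\,\partial^\alpha(\cdot)$ into $(\nabla\Delta^2\H\,\H)\nabla\Delta\dv\u$ ignores the multinomial weights ($\sum_{|\alpha|=3}\partial^{2\alpha}\neq\Delta^3$), so matching the exact form of the corresponding term in \eqref{nbeea1}, which originates from the genuine operators $\Delta^2$ and $(\nabla\dv)^2$, requires an additional argument.
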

\begin{proof} Set $\mathcal{M}:=\mathcal{M}(T)$. We apply the operator \emph{curl} to
\eqref{nam} and use the vector identity \eqref{naff} to obtain that
\begin{equation}\label{nbgh}
  \partial_t (\cu \H)+\u\cdot\nabla(\cu\H)  = -[\cu, \u]\cdot\nabla
\H+\cu((\H\cdot\nabla)\u-\H\dv\u ). \end{equation}
Taking $\partial^\alpha(|\alpha|\leq 3)$ to \eqref{nbgh},
and multiplying the resulting equations by $(ar)^{-4} \times \partial^\alpha\cu\H$,
and integrating over $(0,t)\times \R^d$ with $t\leq T$, we find that
\begin{align}
&\langle (ar)^{-4}\partial^\alpha\cu\H, \partial^\alpha\cu\H\rangle\nonumber\\
=&
 \big\{\big\langle \{a(t)r(t)\}^{-4}\partial^\alpha\cu\H(t), \partial^\alpha\cu\H(t)\big\rangle\big\}\big|_{t=0}\nonumber\\
  & +\langle \partial_t\{(ar)^{-4}\}\partial^\alpha\cu\H, \partial^\alpha\cu\H\rangle\nonumber\\
 & -2\int_0^t\int (ar)^{-4}\partial^\alpha[(\u\cdot\nabla)\cu\H)\partial^\alpha\cu\H]\,\textrm{d}x
 \textrm{d}\tau\nonumber\\
  & -2\int_0^t\int (ar)^{-4}\partial^\alpha\{[\cu, \u]\cdot\nabla\H\}(\partial^\alpha\cu\H)\,\textrm{d}x
 \textrm{d}\tau\nonumber\\
&+2\int_0^t\int (ar)^{-4}\partial^\alpha\cu((\H\cdot\nabla)\u-\H\dv\u)(\partial^\alpha\cu\H)\,\textrm{d}x
 \textrm{d}\tau.\nonumber
\end{align}

For $|\alpha|\leq 2$, it is easy to see that
\begin{align}\label{nbga}
\sum_{|\alpha|\leq 2}\int(ar)^{-4}|\partial^\alpha\cu \H|^2\;\textrm{d}x
\leq  C_0 + tC(\mathcal{M}).
\end{align}

Next, we treat the case $|\alpha|=3$. By a straightforward calculation we arrive at
\begin{align}
&\langle (ar)^{-4}\partial^\alpha\cu\H, \partial^\alpha\cu\H\rangle\nonumber\\
\leq&  C_0 + tC(\mathcal{M})+2\int_0^t\int (ar)^{-4}\partial^\alpha\cu((\H\cdot\nabla)\u
-\H\dv\u)(\partial^\alpha\cu\H)\,\textrm{d}x  \textrm{d}\tau\nonumber\\
  =:&C_0 + tC(\mathcal{M})+2\int^t_0 I_\alpha(\tau)\textrm{d}\tau.\nonumber
 \end{align}
For the term $I_\alpha(\tau)$, using the fact that $\dv \H=0$, we have
\begin{align*}
\sum_{|\alpha|=3}I_\alpha(\tau)=&\sum_{|\alpha|=3}\int (ar)^{-4}\partial^{\alpha}
 \cu( (\H\cdot\nabla)\u - \H\dv\u )\partial^{\alpha}\cu\H\,\textrm{d}x  \\
\leq&-\sum_{|\alpha|=3}\int\big\{ (ar)^{-4}\partial^{\alpha}( -(\u\cdot\nabla)\H
+ (\H\cdot\nabla)\u - \H\dv\u )\nonumber\\
& \times \partial^{\alpha}\cu\cu\H\big\}\,\textrm{d}x  +C(\mathcal{M})\\
\leq &\sum_{|\alpha|=3}\int (ar)^{-4}\partial^{\alpha}(  (\H\cdot\nabla)\u
- \H\dv\u )\partial^{\alpha}\Delta\H\,\textrm{d}x  +C(\mathcal{M})\\
\leq&-\sum_{|\alpha|=3}\int (ar)^{-4}(\H\cdot\nabla)\partial^{\alpha}
\H\partial^\alpha(\Delta\u)\,\textrm{d}x \nonumber\\
&  -\int (ar)^{-4}(\nabla\Delta^2 \H\H)\nabla\Delta\dv\u\,\textrm{d}x  +C(\mathcal{M}),
\end{align*}
where we have used the integration by parts several times in the last step. Thus the lemma is proved,
taking into account the positivity of $a(S,\epsilon q)$ and $r(S,\epsilon q)$.
\end{proof}

Finally, puting Lemmas \eqref{NLb}--\eqref{lem7} together,
and using the induction argument as in \cite{MS01},
we get the following estimate which finishes the proof of Theorem \ref{prop}.
%%%%%%%%%%%%%%%%%%%%%%%%%%%%%%%%%%%%
\begin{lem}\label{NLk}
There exist a constant $C_0>0$ and an increasing function
$C(\cdot)$ from $[0,\infty)$ to $[0,\infty)$, such that for all
$\epsilon \in (0,\epsilon_0]$ and $t\in [0,T]$,
\begin{align}\label{nbja}
\|(S,q,\u,\H)(t)\|^2_{s}\leq C_0+(t+\epsilon)C(\mathcal{M}(T)).
\end{align}
\end{lem}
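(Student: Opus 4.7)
The plan is to prove \eqref{nbja} by induction on the Sobolev order $\sigma\in\{0,1,2,3,4\}$ (with $s=4$), combining Lemmas \ref{NLa}--\ref{lem7}. The entropy $S$ is handled directly by \eqref{nbe}, so it suffices to estimate $(q,\u,\H)$. The base case $\sigma=0$ is Lemma \ref{NLb}. For $1\leq\sigma\leq 3$, Lemma \ref{NLi} bounds $\|\U\|_\sigma$ by the sum $\|\{L_E(\partial_x)\}^\sigma\U\|_0+\|\cu(r_0\u)\|_{\sigma-1}+\|\U\|_{\sigma-1}$, whose three summands are controlled by \eqref{nbeea}, \eqref{nbga1}, and the inductive hypothesis, respectively. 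The magnetic norm $\|\H\|_\sigma$ is then recovered via the Hodge-type inequality $\|\nabla\H\|_{\sigma-1}\leq C(\|\cu\H\|_{\sigma-1}+\|\dv\H\|_{\sigma-1})$ together with $\dv\H=0$ and \eqref{al1}.

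The essential case is $\sigma=4$. Here Lemma \ref{NLi} yields
\begin{equation*}
\|\U\|_4^2 \leq 3\tilde{C}^2\bigl(\|\{L_E(\partial_x)\}^4\U\|_0^2+\|\cu(r_0\u)\|_3^2+\|\U\|_3^2\bigr),
\end{equation*}
and $\|\H\|_4^2\leq C(\|\cu\H\|_3^2+\|\H\|^2)$. Since $E$, $F$, and $(ar)^{-4}$ are positive and bounded above and below by constants depending only on $\mathcal{M}(T)$ and $\epsilon_0$, the three top-order norms $\|\{L_E(\partial_x)\}^4\U\|_0^2$, $\|\cu(r_0\u)\|_3^2$, and $\|\cu\H\|_3^2$ are each comparable to the corresponding weighted $L^2$ integrals in \eqref{nbeea1}, \eqref{nbga2}, and \eqref{nbgaz}.

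The key observation is that each of \eqref{nbeea1}, \eqref{nbga2}, \eqref{nbgaz} carries a top-order magnetic remainder that cannot be absorbed into $C(\mathcal{M}(T))$ alone. Explicitly, \eqref{nbeea1} contributes $+2\int_0^t\!\int (ar)^{-4}\bigl(\nabla(\Delta^2\H)\cdot\H\bigr)\nabla\Delta\dv\u\,dx\,d\tau$ and \eqref{nbga2} contributes $+2\sum_{|\alpha|=3}\int_0^t\!\int (ar)^{-4}(\H\cdot\nabla)\partial^\alpha\H\,\partial^\alpha(\Delta\u)\,dx\,d\tau$, while \eqref{nbgaz} contains precisely these two integrals with the opposite signs. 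Summing the three inequalities therefore annihilates all four magnetic remainders pairwise, and the common prefactor $(ar)^{-4}$ appearing throughout Lemmas \ref{NLee}, \ref{NLg}, and \ref{lem7} is exactly what makes this cancellation exact.

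Combining the cancelled top-order estimate with the inductive bounds at lower $\sigma$ and \eqref{nbe}, one obtains
\begin{equation*}
\|(S,q,\u,\H)(t)\|_4^2\leq C_0+(t+\epsilon)C(\mathcal{M}(T)),
\end{equation*}
where the $\epsilon$ contribution arises from squaring the factor $\tilde{C}=C_1+tC(\mathcal{M}(T))+\epsilon C(\mathcal{M}(T))$ from Lemma \ref{NLi}. The main obstacle is verifying that the magnetic remainders across Lemmas \ref{NLee}, \ref{NLg}, and \ref{lem7} match pairwise with identical coefficients and opposite signs; once this algebraic cancellation is confirmed, the induction closes and \eqref{nbja} follows.
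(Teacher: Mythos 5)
Your proposal is correct and follows essentially the same route as the paper, which itself only says ``put Lemmas 2.2--2.7 together and use the induction argument as in M\'etivier--Schochet''; in particular you correctly identified the decisive point, namely that the top-order magnetic remainders in \eqref{nbeea1} and \eqref{nbga2} appear in \eqref{nbgaz} with the same weight $(ar)^{-4}$ and opposite signs, so that summing the three inequalities cancels them exactly. The only detail to state more carefully is the passage from the weighted quantities $\langle E\,\U_\sigma,\U_\sigma\rangle$, $\langle F\partial^\alpha\omega,\partial^\alpha\omega\rangle$, $\int(ar)^{-4}|\partial^\alpha\cu\H|^2$ back to unweighted norms: bounding the weights merely by ``constants depending on $\mathcal{M}(T)$'' would produce a term $C(\mathcal{M}(T))\cdot C_0$, which is not of the form $C_0+tC(\mathcal{M}(T))$, so one must freeze the weights at $t=0$ (where they are controlled by $M_0$ alone) and absorb the difference via $\int_0^t\|\partial_\tau(\cdot)\|_{L^\infty}\,d\tau\leq tC(\mathcal{M}(T))$, exactly as in the proof of Lemma \ref{NLb}.
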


\section{Incompressible limit}

In this section, we prove the convergence part of Theorem \ref{mth}
by modifying the method developed by M\'{e}tivier and Schochet
\cite{MS01}, see also some extensions in \cite{A05,A06, LST}.

\begin{proof}[Proof of the convergence part of Theorem \ref{mth}]
 The uniform bound \eqref{nav} implies, after extracting a subsequence, the following limit:
   \begin{align}
      & (q^\epsilon,\u^\epsilon,\H^\epsilon )\rightharpoonup (q,\v,\bar\H )\quad \text{weakly-}\!\ast
     \  \text{in} \quad L^\infty (0,T; H^4(\mathbb{R}^d)).
\label{caa}
\end{align}

 The equations \eqref{nam} and \eqref{nan} imply that
 $\partial_tS^\epsilon$ and $\partial_t \H^\epsilon \in C([0,T],H^{3}(\mathbb{R}^d))$.
 Thus, after further extracting a subsequence, we
  obtain that, for all $s'<4$,
\begin{align}
 S^\epsilon \rightarrow \bar S &  \quad \text{strongly in}  \quad
 C([0,T],H^{s'}_{\mathrm{loc}}(\mathbb{R}^d)),\label{cab}\\
\H^\epsilon  \rightarrow \bar \H &\quad \text{strongly in} \quad
C([0,T],H^{s'}_{\mathrm{loc}}(\mathbb{R}^d)),\label{cad}
\end{align}
where the limit $\bar \H \in C([0,T],H^{s'}_{\mathrm{loc}}(\mathbb{R}^d))\cap
L^\infty (0,T;H^{4}_{\mathrm{loc}}(\mathbb{R}^d))$.
Similarly, by \eqref{nbfg} and the uniform bound \eqref{nav}, we have
\begin{align}
 & \cu (r_0(S^\epsilon) \u^\epsilon) \rightarrow \cu (r_0(\bar S) \v) \quad
 \text{strongly in}  \quad C([0,T],H^{s'-1}_{\mathrm{loc}}(\mathbb{R}^d))\label{cac}
  \end{align}
for all $ s'<4$, where $ r_0(\bar S)=\lim_{\epsilon \rightarrow
0}r_0(S^\epsilon):= \lim_{\epsilon \rightarrow 0}r(S^\epsilon,0)$.

In order to obtain the limit system, we need to prove that the
convergence in \eqref{caa} holds in the strong topology of
$L^2(0,T;H^{s'}_{\mathrm{loc}}(\mathbb{R}^d))$ for all $s'<4$. To this end,
we first show that $q=0$ and $\dv \v=0$. In fact, from \eqref{nbeb} we get
\begin{align}\label{nbeb1}
 \epsilon E(S^\epsilon,\epsilon q^\epsilon)\partial_t\U^\epsilon
 +L(\partial_x)\U^\epsilon=-\epsilon E (S^\epsilon,\epsilon q^\epsilon)
 \u^\epsilon\cdot \nabla\U^\epsilon+\epsilon \mathbf{J}^\epsilon.
\end{align}
Since
\begin{align*}
 E (S^\epsilon,\epsilon
 q^\epsilon)-E (S^\epsilon,0)=O(\epsilon),
\end{align*}
we have
\begin{align}\label{pass}
\epsilon E (S^\epsilon,0)\partial_t\U^\epsilon
 +L(\partial_x)\U^\epsilon=\epsilon \mathbf{h}^\epsilon ,
\end{align}
where, by virtue of \eqref{nav}, $\mathbf{h}^\epsilon$ is uniformly bounded
in $C([0,T],H^{3}(\mathbb{R}^d))$. Passing to the weak
limit in \eqref{pass}, we obtain $\nabla q=0$ and $\dv \v=0$.
Since $q\in L^\infty(0,T;H^4(\mathbb{R}^d))$, we infer that $q=0$.
Now to complete the proof of Theorem \ref{mth},
we first prove the following proposition.

\begin{prop}\label{LC}
Suppose that the assumptions in Theorem \ref{mth} hold, then
$q^\epsilon$ converges strongly to $0$ in
$L^2(0,T;H^{s'}_{\mathrm{loc}}(\mathbb{R}^d))$ for all $s'<4$, and $\dv
\u^\epsilon$ converges strongly to $0$ in $L^2(0,T;
H^{s'-1}_{\mathrm{loc}}(\mathbb{R}^d))$ for all $s'<4$.
\end{prop}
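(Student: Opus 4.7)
The plan is to reduce the strong-convergence claim to the local energy decay theorem for acoustic wave equations in the whole space, following the framework of M\'{e}tivier-Schochet \cite{MS01}. The essential idea is to show that $q^\epsilon$ satisfies a variable-coefficient wave equation in which the singular $1/\epsilon^2$-scaling appears cleanly on the left-hand side, while the magnetic field contributes only a uniformly bounded forcing term.

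First I would differentiate \eqref{nak} in $t$ and substitute $\partial_t\dv\u^\epsilon$ using $\dv$ applied to \eqref{nal} (after dividing by $r$). After multiplying through by $\epsilon^2$ and rearranging, this yields a wave-type equation of the schematic form
\begin{equation*}
\epsilon^2 \partial_t\bigl(a(S^\epsilon,\epsilon q^\epsilon)\,\partial_t q^\epsilon\bigr) - \dv\bigl(r(S^\epsilon,\epsilon q^\epsilon)^{-1}\nabla q^\epsilon\bigr) = \epsilon\,\mathbf{F}^\epsilon,
\end{equation*}
where $\mathbf{F}^\epsilon$ collects the convective terms together with the Lorentz contribution $-\dv\bigl(r^{-1}(\cu\H^\epsilon)\times\H^\epsilon\bigr)$. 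Thanks to the uniform bound \eqref{nav} and $\dv\H^\epsilon=0$, $\mathbf{F}^\epsilon$ stays bounded in $L^\infty(0,T;H^{2}(\R^d))$, so the entire right-hand side is $O(\epsilon)$ in that space.

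Next I would freeze the coefficients at $a(\bar S,0)$ and $r(\bar S,0)^{-1}$, since the differences are $O(\epsilon)$ by estimates of the type \eqref{nbfe}, and invoke the M\'{e}tivier-Schochet local energy decay theorem. The decay hypothesis \eqref{naw} on $S_0^\epsilon-\underline{S}$ is propagated by the transport equation \eqref{nan} to give decay of $S^\epsilon(t,\cdot)-\underline{S}$ at spatial infinity uniformly on $[0,T]$, which is exactly the condition under which that dispersive theorem yields $q^\epsilon\to 0$ strongly in $L^2(0,T;L^2_{\mathrm{loc}}(\R^d))$. Interpolating this convergence with the uniform $H^4$-bound \eqref{nav} upgrades it to strong convergence in $L^2(0,T;H^{s'}_{\mathrm{loc}}(\R^d))$ for every $s'<4$. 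The second claim is then immediate: rewriting \eqref{nak} as
\begin{equation*}
\dv\u^\epsilon = -\epsilon\,a(S^\epsilon,\epsilon q^\epsilon)\bigl(\partial_t q^\epsilon + \u^\epsilon\cdot\nabla q^\epsilon\bigr),
\end{equation*}
the uniform bounds combined with the strong convergence of $q^\epsilon$ and $\nabla q^\epsilon$ yield $\dv\u^\epsilon\to 0$ strongly in $L^2(0,T;H^{s'-1}_{\mathrm{loc}}(\R^d))$.

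The main obstacle I anticipate is verifying that the magnetic Lorentz source term in $\mathbf{F}^\epsilon$ genuinely behaves as a bounded, non-singular forcing rather than reintroducing a $1/\epsilon$-scale from the strong magnetic-hydrodynamic coupling. Controlling it requires the full $H^4$-regularity of $\H^\epsilon$ from \eqref{nav} together with vector identities such as \eqref{naeo}--\eqref{nae} and $\dv\H^\epsilon=0$. A secondary concern is the coefficient-freezing step, which is handled by a perturbation argument relying on the strong local convergence \eqref{cab} of $S^\epsilon$ and the far-field decay granted by \eqref{naw}.
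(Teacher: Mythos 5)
Your proposal follows the paper's proof essentially verbatim: the same wave equation \eqref{cag} obtained by combining $\epsilon^2\partial_t$ of \eqref{nak} with $\dv$ of \eqref{nal} divided by $r$, the same identification of the right-hand side as an $O(\epsilon)$ forcing (including the Lorentz term), the same appeal to the M\'etivier--Schochet local energy decay result (Lemma \ref{LD}), and the same interpolation with the uniform bound \eqref{nav}. The only deviation is your coefficient-freezing step, which is unnecessary and would in fact be delicate (e.g.\ $\epsilon^2\partial_t\bigl((a(S^\epsilon,\epsilon q^\epsilon)-a(\bar S,0))\partial_t q^\epsilon\bigr)$ need not tend to $0$ in global $L^2(0,T;L^2(\R^d))$ since $S^\epsilon\to\bar S$ only locally); Lemma \ref{LD} is formulated precisely for $\epsilon$-dependent coefficients converging in $C([0,T],H^4_{\mathrm{loc}})$, so the paper applies it directly with $b^\epsilon=a(S^\epsilon,\epsilon q^\epsilon)$ and $c^\epsilon=r(S^\epsilon,\epsilon q^\epsilon)^{-1}$.
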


The proof of Proposition \ref{LC} is built on the the following
dispersive estimates on the wave equations established by M\'{e}tivier
and Schochet \cite{MS01} and reformulated in \cite{A06}.

\begin{lem}[\!\!\cite{MS01,A06}]\label{LD}
   Let $T>0$ and $w^\epsilon$ be a bounded sequence in $C([0,T],\linebreak H^2(\mathbb{R}^d))$, such that
   \begin{align*}
    \epsilon^2\partial_t(b^\epsilon \partial_t w^\epsilon)-\nabla\cdot (c^\epsilon \nabla w^\epsilon)=e^\epsilon,
   \end{align*}
where $e^\epsilon$ converges to $0$ strongly in $L^2(0,T;
L^2(\mathbb{R}^d))$. Assume in addition that, for some $s> d/2+1$, the
coefficients $(b^\epsilon, c^\epsilon)$ are uniformly bounded in
$C([0,T],H^4(\mathbb{R}^d))$ and converges in $C([0,T],H^4_{\mathrm{loc}}(\mathbb{R}^d))$
to a limit $(b,c)$ satisfying the decay estimates
\begin{gather*}
  |b(x,t)-\underline b|\leq C_0 |x|^{-1-\delta}, \quad |\nabla_x b(x,t)|\leq C_0 |x|^{-2-\delta}, \\
 |c(x,t)-\underline c|\leq C_0 |x|^{-1-\delta}, \quad |\nabla_x c(x,t)|\leq C_0 |x|^{-2-\delta},
\end{gather*}
for some given positive constants $\underline b$, $\underline c$, $
C_0$ and $\delta$. Then the sequence $w^\epsilon$ converges to $0$
in  $L^2(0,T; L^2_{\mathrm{loc}}(\mathbb{R}^d))$.
\end{lem}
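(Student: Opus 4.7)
The plan is to follow the Métivier–Schochet dispersive strategy: reduce the local $L^2_t L^2_{\mathrm{loc}}$ decay of the rapidly oscillating sequence $w^\epsilon$ to the absence of positive eigenvalues of a limiting Helmholtz operator. First, multiply $w^\epsilon$ by a smooth temporal cutoff $\chi_0(t)$ supported slightly outside $[0,T]$ and equal to $1$ on $[0,T]$, take the time Fourier transform $\widetilde w^\epsilon(\tau,x)$, and rescale by $\lambda := \epsilon \tau$. Setting $v^\epsilon(\lambda,x) := \widetilde w^\epsilon(\lambda/\epsilon,x)$, the equation becomes the family of Helmholtz problems
\[
 -\lambda^2\, b^\epsilon\, v^\epsilon - \nabla\cdot(c^\epsilon\nabla v^\epsilon) = f^\epsilon(\lambda,\cdot),
\]
where $f^\epsilon$ collects $\widetilde{e^\epsilon}$, commutators from $\chi_0$, and lower-order terms, and $f^\epsilon \to 0$ in $L^2_\lambda L^2_x$ on bounded $\lambda$-intervals by the hypothesis on $e^\epsilon$. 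Plancherel in time gives $\|w^\epsilon\|_{L^2(0,T;L^2(K))}^2 \lesssim \epsilon\, \|v^\epsilon\|_{L^2_\lambda L^2(K)}^2$, so it suffices to bound $\|\chi v^\epsilon\|_{L^2_\lambda L^2_x}$ uniformly in $\epsilon$ for every $\chi \in C_c^\infty(\mathbb{R}^d)$ and to deduce convergence of the relevant piece to $0$.

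The heart of the proof is a uniform-in-$\epsilon$ local absorption estimate for $H^\epsilon_\lambda := -\nabla\cdot(c^\epsilon\nabla\cdot) - \lambda^2 b^\epsilon$, established by a contradiction/compactness argument. Assume the estimate fails on some compact $I \subset (0,\infty)$: extract sequences $\epsilon_n \to 0$, $\lambda_n \to \lambda_\star \in I$, $f_n \to 0$ in $L^2$, and $u_n$ with $\|\chi u_n\|_{L^2} = 1$ solving $H^{\epsilon_n}_{\lambda_n} u_n = f_n$. The uniform $C([0,T];H^4)$-bound on the coefficients, their $C([0,T];H^4_{\mathrm{loc}})$-convergence, and elliptic regularity permit passage to a limit $u_\star \in L^2_{\mathrm{loc}}$ with $\|\chi u_\star\|_{L^2} = 1$ solving $H_{\lambda_\star} u_\star = 0$ with outgoing radiation at infinity. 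Kato's theorem on absence of positive eigenvalues for short-range perturbations of $-\underline c\Delta - \lambda^2 \underline b$, applicable precisely because $|b-\underline b|,|c-\underline c| = O(|x|^{-1-\delta})$ and $|\nabla b|,|\nabla c| = O(|x|^{-2-\delta})$, combined with a Rellich-type unique-continuation-at-infinity lemma, forces $u_\star \equiv 0$, contradicting the normalization.

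The remaining task is to handle the frequency tails. For $|\lambda| \ge R$ with $R$ large, an elliptic estimate on $H^\epsilon_\lambda$ provides decay in $R^{-2}$ after using the uniform $L^2_\lambda H^2_x$ bound on $v^\epsilon$ that follows from the $C_t H^2_x$ bound on $w^\epsilon$ and Plancherel; for $|\lambda| \le \rho$ the measure of the $\lambda$-interval makes the contribution at most $O(\sqrt\rho)$. The main obstacle is the uniform absorption estimate itself: the perturbation argument from $H_\lambda$ to $H^\epsilon_\lambda$ must be quantitative and must use the pointwise decay of $(b,c)$ at infinity in an essential way, because coefficient convergence is only locally uniform, not global. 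Once the three frequency regimes (low, mid, high) are assembled, Plancherel yields $\|\chi w^\epsilon\|_{L^2(0,T;L^2(\mathbb{R}^d))} \to 0$ for every $\chi \in C_c^\infty(\mathbb{R}^d)$, which is exactly the conclusion of Lemma~\ref{LD}.
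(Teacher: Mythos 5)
You should first note that the paper does not prove this lemma at all: it is quoted verbatim (in Alazard's reformulation) from \cite{MS01,A06}, so the only ``proof'' in the paper is the citation. Your sketch does capture the genuine core of the M\'etivier--Schochet argument --- everything ultimately rests on the absence of nontrivial solutions of the limiting Helmholtz equation $\nabla\cdot(c\nabla u)+\lambda^2 b u=0$ under the stated decay of $(b,c)$, obtained from a Kato/Rellich-type theorem, together with a compactness--contradiction step to make the estimate uniform in $\epsilon$. But two steps of your stationary (Fourier-in-time, limiting-absorption) implementation do not go through as written.

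First, the reduction to a family of Helmholtz problems indexed by $\lambda$ is illegitimate when $b^\epsilon$ and $c^\epsilon$ depend on $t$: the time Fourier transform of $b^\epsilon(x,t)\partial_t w^\epsilon$ is a convolution in $\tau$, not $b^\epsilon(x)$ times something evaluated at $\tau$, and this contribution is neither a commutator with $\chi_0$ nor lower order --- it is the same order as the main term. In \cite{MS01} this is handled by a separate, substantial approximation step that freezes the coefficients in time on short subintervals (using the $C([0,T],H^s_{\mathrm{loc}})$ continuity of the coefficients), after which the argument is run on the time-independent self-adjoint operator $-b^{-1}\nabla\cdot(c\nabla\cdot)$ via the spectral theorem and a RAGE/Wiener-type argument, rather than via a stationary resolvent estimate. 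Your sketch omits this step entirely. Second, the Plancherel bookkeeping is backwards: with $\lambda=\epsilon\tau$ one gets $\|\chi_0 w^\epsilon\|^2_{L^2_tL^2_x}=c\,\epsilon^{-1}\|v^\epsilon\|^2_{L^2_\lambda L^2_x}$, not $\lesssim \epsilon\|v^\epsilon\|^2$. Since the trivial bound coming from $w^\epsilon\in C([0,T],H^2)$ already gives $\|v^\epsilon\|^2_{L^2_\lambda L^2_x}=O(\epsilon)$, a uniform bound on $\|\chi v^\epsilon\|_{L^2_\lambda L^2_x}$ proves nothing; what is required is the strict gain $\|\chi v^\epsilon\|^2_{L^2_\lambda L^2_x}=o(\epsilon)$, and your three-regime decomposition must be rewritten to deliver that. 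Until these two points are repaired the argument does not close, so the safe course --- and the one the paper takes --- is to invoke \cite{MS01,A06} directly.
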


\begin{proof}[Proof of Proposition \ref{LC}]
We first show that $q^\epsilon$ converges strongly to $0$ in
$L^2(0,T; \linebreak H^{s'}_{\mathrm{loc}}(\mathbb{R}^d))$ for all
$s'<4$. An application of the operator $\epsilon^2 \partial_t$ to \eqref{nak}
gives
\begin{align}\label{cae}
\epsilon^2\partial_t (a (S^\epsilon,\epsilon q^\epsilon)
\partial_t q^\epsilon) +\epsilon \partial_t\dv \u^\epsilon
=-\epsilon^2 \partial_t\{a (S^\epsilon,\epsilon
q^\epsilon)(\u^\epsilon\cdot\nabla )q^\epsilon\}.
\end{align}
Dividing \eqref{nal} by $r^\epsilon(S^\epsilon,\epsilon q^\epsilon)$
and then taking the operator \emph{div} to the resulting equations, one has
\begin{align}\label{caf}
& \partial_t \dv \u^\epsilon +\frac{1}{\epsilon}\dv
\Big(\frac{1}{r (S^\epsilon,\epsilon q^\epsilon)}\nabla q^\epsilon\Big)\nonumber\\
& \qquad \qquad =-\dv ((\u^\epsilon\cdot \nabla)\u^\epsilon)+\dv
\Big(\frac{1}{r (S^\epsilon,\epsilon q^\epsilon)}(\cu
\H^\epsilon)\times \H^\epsilon\Big).
\end{align}
Subtracting \eqref{caf} from \eqref{cae}, we get
\begin{align}\label{cag}
\epsilon^2\partial_t (a (S^\epsilon,\epsilon q^\epsilon)
\partial_t q^\epsilon) -\dv
\Big(\frac{1}{r (S^\epsilon,\epsilon q^\epsilon)}\nabla
q^\epsilon\Big) =  F (S^\epsilon, q^\epsilon, \u^\epsilon,
\H^\epsilon),
\end{align}
where
\begin{align*}
 F (S^\epsilon, q^\epsilon, \u^\epsilon, \H^\epsilon)
  =\, & \epsilon  \dv \Big(\frac{1}{r (S^\epsilon,\epsilon q^\epsilon)}
  (\cu \H^\epsilon)\times \H^\epsilon\Big)\\
& -\epsilon \dv
((\u^\epsilon\cdot \nabla)\u^\epsilon) - \epsilon^2
\partial_t\{a (S^\epsilon,\epsilon
q^\epsilon)(\u^\epsilon\cdot\nabla )q^\epsilon\}.
\end{align*}

In view of the uniform boundedness of $(S^\epsilon,q^\epsilon,\u^\epsilon,\H^\epsilon)$,
the smoothness and positivity assumptions on $a(S^\epsilon,\epsilon q^\epsilon)$
and $r (S^\epsilon,\epsilon q^\epsilon)$, and the convergence of $S^\epsilon$, we find that
 %%%%%%%%%%%%%%%%%%%%%%
\begin{align*}
F (S^\epsilon, q^\epsilon, \u^\epsilon, \H^\epsilon)
\rightarrow 0 \quad \text{strongly in} \quad L^2(0,T;
L^2(\mathbb{R}^d)),
\end{align*}
and the coefficients in \eqref{cag} satisfy the requirements in
Lemma \ref{LD}. Therefore, by virtue of Lemma \ref{LD},
\begin{align*}
q^\epsilon \rightarrow 0 \quad \text{strongly in} \quad L^2(0,T;
L^2_{\mathrm{loc}}(\mathbb{R}^d)).
\end{align*}

On the other hand, the uniform boundedness of $q^\epsilon$ in $C([0,T],H^4(\mathbb{R}^d))$
and an interpolation argument yield that
\begin{align*}
q^\epsilon \rightarrow 0 \quad \text{strongly in}
 \quad    L^2(0,T;  H^{s'}_{\mathrm{loc}}(\mathbb{R}^d))\ \ \text{for all} \ \  s'<4.
\end{align*}
Similarly, we can obtain the  convergence of  $\dv u^\epsilon$.
\end{proof}

We continue our proof of Theorem \ref{mth}. From Proposition
\ref{LC}, we know that
\begin{align*}
\dv\, \u^\epsilon \rightarrow \dv\,\v\quad \mathrm{in} \quad L^2(0,T;
H^{s'-1}_{\mathrm{loc}}(\mathbb{R}^d)).
\end{align*}
Hence, from \eqref{cac} it follows that
\begin{align*}
 \u^\epsilon \rightarrow \v\quad \mathrm{in} \quad L^2(0,T;
H^{s'}_{\mathrm{loc}}(\mathbb{R}^d))\qquad\mbox{for all }s'<4.
\end{align*}
By \eqref{cab}, \eqref{cad} and Proposition \ref{LD}, we obtain
\begin{equation*}
\begin{array}{ccl}
r^\epsilon(S^\epsilon, \epsilon q^\epsilon) \rightarrow r_0(\bar S)
& \mathrm{in} &  L^\infty(0,T; L^\infty(\mathbb{R}^d));\\
\nabla \u^\epsilon\rightarrow\nabla\v & \mathrm{in}
& L^2(0,T; H^{s'-1}_{\mathrm{loc}}(\mathbb{R}^d));\\
\nabla \H^\epsilon\rightarrow \nabla  \bar \H    & \mathrm{in} &
L^2(0,T; H^{s'-1}_{\mathrm{loc}}(\mathbb{R}^d)).
\end{array}
\end{equation*}
Passing to the limit in the equations for $S^\epsilon$ and
$\H^\epsilon$, we see that the limits $\bar{S}$ and $\bar \H$
satisfy
\begin{align*}
  \partial_t \bar{S} +(\v \cdot \nabla) \bar{S} =0, \quad
   \partial_t \bar{\H}  + ( {\v}  \cdot \nabla) \bar{\H}
   - ( \bar{\H} \cdot \nabla) {\v}  =0
\end{align*}
in the sense of distributions. Since   $r(S^\epsilon,
\epsilon q^\epsilon)-r_0(S^\epsilon)=O(\epsilon)$, we have
\begin{align*}
(r(S^\epsilon, \epsilon
q^\epsilon)-r_0(S^\epsilon))(\partial_t
\u^\epsilon+(\u^\epsilon\cdot \nabla)\u^\epsilon)\rightarrow 0 ,
\end{align*}
whence,
\begin{align*}
r ( S^\epsilon, \epsilon q^\epsilon)(\partial_t
\u^\epsilon +(\u^\epsilon\cdot \nabla)\u^\epsilon )
 =\,& (r (S^\epsilon, \epsilon q^\epsilon)-r_0(S^\epsilon))(\partial_t
\u^\epsilon+(\u^\epsilon\cdot \nabla)\u^\epsilon)\nonumber\\
& +\partial_t(r_0(S^\epsilon)\u^\epsilon)+(\u^\epsilon \cdot \nabla)(r_0(S^\epsilon)\u^\epsilon)\\
\rightarrow &\, r_0(\bar S)(\partial_t \v +(\v\cdot \nabla)\v)
\end{align*}
in the sense of distributions.

Applying the operator \emph{curl} to
the momentum equations \eqref{nal} and then taking to the limit, we conclude that
\begin{align*}
\cu\big( r_0(\bar{S})(\partial_t \v+\v\cdot \nabla \v)
 -(\cu\bar{\H}) \times \bar{\H}-\mu\Delta \v \big)=0.
\end{align*}
Therefore, recalling $\cu\nabla =0$, we see that the limit
$(\bar S,\v ,\bar \H)$ satisfies
\begin{align}
&   r(\bar{S},0)(\partial_t \v+(\v\cdot \nabla) \v)
  -(\cu\bar{\H}) \times \bar{\H}-\mu\Delta \v +\nabla \pi =0,  \label{cba} \\
&  \partial_t \bar{\H}  + ( {\v}  \cdot \nabla) \bar{\H}
   - ( \bar{\H} \cdot \nabla) {\v} =0,  \label{cbb} \\
 &\partial_t \bar{S} +(\v \cdot \nabla) \bar{S} =0,  \label{cbc} \\
& \dv \v=0,  \quad \dv \bar{\H} =0\label{cbd}
\end{align}
for some function $\pi$.

If we employ the same arguments as in the proof
of \cite[Theorem 1.5]{MS01}, we find that $(\bar S, \v, \bar \H)$
satisfies the initial conditions \eqref{nax}. Moreover, the
standard iterative method shows that the system \eqref{cba}--\eqref{cbd}
with initial data \eqref{nax} has a unique solution
$(S^*, \v^*, \H^*)\in C([0,T],H^4(\mathbb{R}^d)).$ Thus, the uniqueness of
solutions to the limit system \eqref{cba}--\eqref{cbd} implies that
the above convergence results hold for
the full sequence $(S^\epsilon, q^\epsilon, \u^\epsilon,\H^\epsilon)$.
Thus, the proof is  completed.
\end{proof}

\medskip
 \noindent
{\bf Acknowledgements:}  The authors are grateful to the anonymous referees for their constructive
comments and helpful suggestions, which improved the earlier
version of this paper. The authors thank Professor
Fanghua Lin for suggesting this problem and helpful discussions.
This work was partially done when Li was visiting the Institute of
Applied Physics and Computational Mathematics in Beijing. He would
like to thank the institute for hospitality. Jiang was supported by
the National Basic Research Program under the Grant 2011CB309705
 and NSFC (Grant No. 11229101). Ju was supported by
NSFC (Grant No. 11171035). Li was supported  by NSFC (Grant No.
11271184, 10971094), PAPD, and the Fundamental Research Funds for the
Central Universities.
%--------------------------------------------------------------------

%\newpage

\end{document}